\DeclareMathOperator{\Div}{\text{div}}
\DeclareMathOperator{\di}{\mathrm{d}}
\DeclareMathOperator{\E}{\mathbf{E}}
\newcommand{\h}{\ell\hat{n}}
\newcommand{\Th}{T_{\h}}
\newcommand{\hoh}{\hat{h}\otimes\hat{h}}
\newcommand{\non}{\hat{n}\otimes\hat{n}}
\newcommand{\ep}{\varepsilon}
\newtheorem{thm}{Theorem}[section]
\newtheorem{prop}{Proposition}
\newtheorem{lemma}[thm]{Lemma}
\newtheorem{assumption}{Assumption}
\newtheorem{remark}{Remark}
\theoremstyle{definition}
\newtheorem{definition}{Definition}[section]
\numberwithin{equation}{section}
\def\@fnsymbol#1{\ensuremath{\ifcase#1\or \dagger\or \ddagger\or
   \mathsection\or \mathparagraph\or \|\or **\or \dagger\dagger
   \or \ddagger\ddagger \else\@ctrerr\fi}}
\providecommand{\keywords}[1]
{
  \small	
  \textbf{\textit{Key words and phrases---}} #1
}
\title{Sufficient conditions for local scaling laws for stationary martingale solutions to the 3D Navier-Stokes equations}
\author{Stavros Papathanasiou\thanks{stavrosp@umd.edu}}
\affil{Department of Mathematics\\ University of Maryland, College Park}
\date{\vspace{-5ex}}
\begin{document}
\maketitle
\begin{abstract}
The main goal of this paper is to obtain sufficient conditions that allow us to rigorously derive local versions of the 4/5 and 4/3 laws of hydrodynamic turbulence, by which we mean versions of these laws that hold in bounded domains. This is done in the context of stationary martingale solutions of the Navier-Stokes equations driven by an Ornstein-Uhlenbeck process. Specifically, we show that under an assumption of \say{on average} precompactness in $L^3,$ the local structure functions are expressed up to first order in the length scale as nonlinear fluxes, in the vanishing viscosity limit and within an appropriate range of scales. If in addition one assumes local energy equality, this is equivalent to expressing the structure functions in terms of the local dissipation. Our precompactness assumption is also shown to produce stationary martingale solutions of the Euler equations with the same type of forcing in the vanishing viscosity limit.
\end{abstract}
\keywords{turbulence, martingale solutions, Navier-Stokes equations}
\tableofcontents{}
\section{Introduction}
\subsection{Statement of the main result}
In this article we consider the Navier-Stokes equations on a sufficiently smooth bounded domain $D\subset \mathbb{R}^3$ driven by an Ornstein-Uhlenbeck process.
Specifically, the set of equations we are concerned with are: 
\begin{equation}\tag{NSE}\begin{cases} \partial_t u + u\cdot\nabla u + \nabla p = \nu\Delta u + Z_t, \\ 
\Div u = 0 \text{ on } D, \\
u=0 \text{ on } \partial D,\\
dZ_t = -LZ_tdt + dW_t.\end{cases}\label{eqn:NSE}\end{equation}
The process $(W_t)_{t\geq0}$ is a $Q$-Wiener process of the form:
\begin{equation}W_t = \sum_{k=1}^\infty \sigma_ke_k(x)\beta_k(t),
\end{equation}
where $(\beta_k)_{k=1}^\infty$ is a sequence of independent standard Brownian motions defined on the same stochastic basis, $(\sigma_k)_{k=1}^\infty$ is a sequence of complex numbers satisfying the coloring condition
\begin{equation}
    \sum_{k=1}^\infty |\sigma_k|^2 := \ep <\infty
\end{equation} and $(e_k)_{k=1}^\infty$ is an orthonormal basis of eigenvectors of the Stokes operator on the space 
$$H = \overline{\{u\in (C_c^\infty(D))^3| \Div u = 0\}},$$
the completion being taken with respect to the $L^2$ norm.  The covariance operator $Q$ is defined on $H$ by $$Qe_k = |\sigma_k|^2 e_k.$$
The linear operator $L$ satisfies $$c_1 \left<Av,v\right>\leq \left<Lv,v\right>\leq c_2 \left<Av,v\right>$$ for two positive constants $c_1, c_2,$ where $A=-P\Delta$ is the Stokes operator, $P$ being the Leray projection operator. 
Furthermore, the couple $(L,Q)$ is chosen in such a way that the Ornstein-Uhlenbeck SPDE for $Z_t$ given in \eqref{eqn:NSE} is such that weak solutions are actually strong and unique and possesses a unique invariant measure, see for instance sufficient conditions in \cite[Chapter~5]{DPZ}. These conditions on $L,Q$ relate to constructions given in the \hyperref[Appendix]{Appendix}.

The parameter $\nu>0$ is the kinematic viscosity but may also be interpreted as the inverse Reynolds number. In particular, we may fix a reference scale and a reference velocity and treat the vanishing viscosity and infinite Reynolds number limits as one and the same. 

Notice that we do not follow the standard practice of driving the Navier-Stokes equation by a white in time process, opting instead to drive by the more regular Ornstein-Uhlenbeck process $Z_t.$ Although this is not common in the mathematical literature, driving our flow by a stochastic process which is correlated in time is a practice employed in the physics literature, see for instance \cite{TB} for an application to plasma turbulence. Later, we will also make an assumption on the pressure field which seems more plausible for an external forcing with slightly better time regularity than Brownian motion. However, with minor changes in the calculations that follow and under similar assumptions to the ones we will proceed to make, one is able to adapt our results to the cases of white-in-time (i.e. driving directly by a force of the form of $W_t$ above) or an appropriate \say{OU tower} type forcing (see for instance \cite{ASEM} for a definition). This latter type of forcing ensures higher regularity in time for $Z_t$ as well as for $u$. Note also that the results we obtain here can be directly adapted to the Navier-Stokes equations on the torus $\mathbb{T}^3=(\mathbb{R}/2\pi\mathbb{Z})^3$ with some simplifications.

In the context of the 3D Navier-Stokes equations driven by white in time forcing on a bounded domain, the only notion for which we know how to construct global in time solutions is that of \textit{martingale solutions}. These are the weak in the probabilistic sense analogue of Leray-Hopf solutions. We refer to \cite{FG} for their definition and a proof of their existence. In the same paper, the authors also prove the existence of \textit{stationary} martingale solutions.
This very weak notion of a stationary solution provides a substitute for invariant measures for this kind of system, for which it is not even known whether a Feller Markov semigroup can be defined\footnote{a construction of \textit{Markov selections} however can be found in \cite{FRMarkov}}. The main reason that this notion of solution is the best available option is that the standard a priori estimates only suffice to establish convergence in law for the Galerkin approximations of the Navier-Stokes system, so the Skorokhod embedding theorem must used - hence the stochastic basis is part of the construction. In our case, the notion of solution employed will be that of a martingale solution for the coupled process $(u_t,Z_t).$ We present the relevant definition in the next section.

We sketch the construction of stationary martingale solutions for \eqref{eqn:NSE} in \hyperref[Appendix]{Appendix A}, where we merely adapt the scheme used in \cite{FG} for the white-in-time case. The martingale solutions that we construct and work with will satisfy the estimates:
\begin{equation}\frac{\nu}{T}\mathbf{E}\int_0^T\|\nabla u\|_{L_x^2}^2dt \leq c_1\nu U^2 + c_2U^3 <\infty,\label{stb1} \end{equation} 
\begin{equation}\mathbf{E}\sup_{0\leq t \leq T}\|u\|_{L_x^2}^r \leq \left(\E\|u_0\|_{L^2}^p + \frac{C_p}{\nu}\int_0^T\E \|Z_s\|_{H^{-1}}^p ds\right)\exp(\frac{C_pT}{\nu}) \text{ for any } r\in[1,\infty),\label{stb2}\end{equation}
over any time interval $[0,T],$ where $c_1,c_2$ are positive constants depending on the geometric properties of $D$ and $Z_t$ but independent of $\nu$, while $U$ is defined as:
$$U^2 \equiv \lim_{t\to \infty}\frac{1}{t}\int_0^t\E\|u(\tau)\|_{L^2}^2d\tau.$$ Inequality \eqref{stb1} can be derived following the work in \cite{DF} and for stationary solutions it simplifies to:
\begin{equation}\label{stb12}\nu\mathbf{E}\|\nabla u\|_{L_x^2}^2 dt \leq c_1\nu\E\|u\|_{L^2}^2 + c_2(\E\|u\|_{L^2}^2)^{3/2}.
\end{equation}
The latter estimate will be used to deal with the dissipative terms in the proofs of our main theorems.
The estimates \eqref{stb1} and \eqref{stb2} can be interpolated to obtain that for powers $p,q\in[1,\infty], r\in[1,\infty)$ satisfying
$$\frac{2}{q}+\frac{3}{p} \geq \frac{3}{2} \text{ and } \frac{2}{r}+\frac{3}{p} > \frac{3}{2},$$ the $L_t^qL_x^p$ norm has finite $r$ moment over any finite time interval:
\begin{equation}\mathbf{E}\|u\|_{L_t^q L_x^p}^r < \infty.
\label{stb3}\end{equation} See \cite{45} for a proof. A clear expression with appropriate constants for \eqref{stb3} can be obtained by carefully carrying out the appropriate interpolation procedure, but we omit it since we only need it for fixed values of $\nu,$ while it is also weaker than our assumptions. Note that the upper bounds one obtains in \eqref{stb2} and \eqref{stb3} depend on $\nu.$

Given a vector $h\in\mathbb{R}^3$ and a vector field $u,$ we use the following notation for the increments of $u$:
$$\delta_h u(x) := u(x+h) - u(x).$$
The classical third order structure functions of turbulence for which the $4/3$ and $4/5$ laws are usually demonstrated are defined as follows:
\begin{equation}
s_0(\ell) := \left<|\delta_{\ell\hat{n}}u|^2(\delta_{\ell\hat{n}}u\cdot\hat{n})\right> \text{  and  }
s_{||}(\ell) := \left<\left(\delta_{\ell\hat{n}}u\cdot\hat{n}\right)^3\right>,\notag
\end{equation}
where $\left<\cdot\right>$ denotes some sort of averaging which can for instance be a combination of an ensemble, space, time and solid angle averaging, and each of the various averages can be dropped if the flow satisfies appropriate symmetries. Then, the $4/3$ and $4/5$ laws say that for any length scale $\ell$ in an appropriate \textit{inertial range} of scales we have:
$$s_0(\ell) \sim -\frac{4}{3}\ep\ell,$$
$$s_{||}(\ell) \sim -\frac{4}{5}\ep\ell,$$
where $\ep$ denotes the mean energy dissipation per unit mass. The structure functions are related to energy flux through scale $\ell$ and the negative sign in the $4/3$ and $4/5$ laws indicates a cascade of energy to small scales, see the discussion in \cite{Frisch}. 

The inertial range of scales in which these laws hold consists of scales $\ell$ such that $\ell_D \ll \ell_\nu \ll \ell_I,$ where $\ell_D$ is the \textit{dissipative scale}, at and below which viscous dissipation is dominant and $\ell_I$ is the \textit{integral scale}, at which energy is being input into the system.

The $4/5$ law was first derived by Kolmogorov in \cite{K41b} for statistically stationary, homogeneous and isotropic flows satisfying the fundamental axiom of hydrodynamic turbulence, what is called the $0$-th law or anomalous dissipation, which he previously introduced in \cite{K41a}. We describe anomalous dissipation below. The $4/3$ law was already known to Monin and Yaglom (see \cite{MY}), as was its equivalence to the $4/5$ law. See also the article \cite{Aetal} for a discussion of the relation between the two laws. 

In his original theory of turbulence (known as K41), Kolmogorov postulated also what is known as his \say{similarity hypotheses}, which led to the broader scaling prediction for the structure functions of order $p$:
$$\left<(\delta_{\ell\hat{n}}u)^p\right> \sim C_p(\ep\ell)^{p/3}.$$
It was later argued (and supported by experiments and numerics) that this scaling is generally incorrect. This phenomenon is related to the occurence of rare events in the flow and is known as \textit{intermittency}. Thus one needs to introduce intermittency corrections $\tau(p/3)$ such that $p/3+\tau(p/3)$ is the exponent on the right hand side above. See \cite{Frisch} for a discussion of intermittency, as well as \cite{SK,vdW} for evidence of deviation from the exponent $p/3$. At any rate, the third order exponent (i.e. for $p=3$) and in particular the $4/5$ law are considered to be exact. It is also worth remarking that intermittency is related to local averages of the dissipation of the form $$\frac{\nu}{r^3}\int_{|x-y|<r}|\nabla u(y)|^2dy,$$
see for instance \cite{Ob} and \cite{ISY}. Thus the local structure of a turbulent fluid is closely related to deviations from the predictions of K41.

One may observe that since the structure functions are defined in terms of increments of the velocity field, they only make sense, strictly speaking, if the domain of the flow has no boundary. Therefore, one of the points of this article is to obtain $4/3$ and $4/5$ laws that make sense for flows in a bounded domain as well.

We first define our local structure functions.
\begin{definition} Let $\psi\in C^\infty(D).$ The local third order structure functions (with respect to $\psi$) are defined as:
\begin{equation}S_0(\ell) = \frac{1}{4\pi}\mathbf{E}\int_{S^2}\int\psi|\delta_{\ell \hat{n}}u|^2\delta_{\ell \hat{n}}u\cdot\hat{n}dxdS(\hat{n}),
\end{equation}
\begin{equation}S_{||}(\ell) = \frac{1}{4\pi}\mathbf{E}\int_{S^2}\int\psi(\delta_{\ell\hat{n}}u\cdot\hat{n})^3dxdS(\hat{n}).
\end{equation}
\end{definition}
In the above and in everything that follows, the domain $D$ of integration in $x$ has been suppressed. We proceed to state the assumptions that we will employ. 
\begin{assumption}\label{a1}There exists $C>0$ independent of $\nu$ for which our family of stationary martingale solutions $\{u^\nu\}_{\nu>0}$ satisfies:
\begin{gather}
    \sup_{\nu\in (0,1)}\mathbf{E}\|u^\nu\|_{L^3(D)}^3 \leq C \text{ and}\\
    \lim_{|h|\rightarrow 0} \sup_{\nu \in (0,1)}\mathbf{E}\|\delta_h u^\nu\|_{L^3(D)}^3 = 0.
\end{gather}
\end{assumption}
\begin{assumption}\label{a2}There exists $C>0$ independent of $\nu$ and a choice of pressure fields $\{p^\nu\}_{\nu>0}$ corresponding to the solutions $\{u^\nu\}_{\nu>0}$ such that:
\begin{gather}
    \sup_{\nu\in (0,1)}\mathbf{E}\|p^\nu\|_{L^{3/2}(D)}^{3/2} < C  \text{ and} \label{p1}\\
    \lim_{|h|\rightarrow0}\sup_{\nu\in(0,1)}\mathbf{E}\|\delta_h p^\nu\|_{L^{3/2}(D)}^{3/2} = 0.
\end{gather}
\end{assumption}
\begin{assumption}\label{a3} The family of stationary martingale solutions $\{u^\nu\}_{\nu>0}$ satisfies local energy equality:
    \begin{equation}
        2\nu\mathbf{E}\int\psi|\nabla u^\nu|^2 dx = \nu \mathbf{E}\int|u^\nu|^2\Delta\psi dx + \mathbf{E}\int(|u^\nu|^2 +2p^\nu)(u^\nu\cdot\nabla\psi)dx + 2\mathbf{E}\int \psi u\cdot Z_tdx \label{LEE}\tag{LEE}
    \end{equation}
\end{assumption}
We now state our main results. The first of them relates the local structure functions to nonlinear flux terms plus a local contribution from the noise:
\begin{thm}\label{Thm1}
Let $\{u^\nu\}_{\nu>0}$ be a family of stationary martingale solutions to \eqref{eqn:NSE} satisfying \hyperref[a1]{Assumption 1} and \hyperref[a2]{Assumption 2}. Then, for every $\nu\in(0,1)$ there exists a scale $\ell_\nu$ such that $\ell_\nu\rightarrow 0$ when $\nu\rightarrow 0,$ for which we have:

\begin{equation}
\lim_{\ell_I\rightarrow 0}\limsup_{\nu\rightarrow 0}\sup_{\ell\in[\ell_\nu,\ell_I]}\left|\frac{1}{\ell}S_0(\ell)+\frac{2}{3}\mathbf{E}\int (|u|^2 + 2p)u\cdot\nabla\psi dx + \frac{4}{3}\mathbf{E}\int\psi u\cdot Zdx \right| = 0, \label{L431}
\end{equation}
\begin{equation}\lim_{\ell_I\rightarrow 0}\limsup_{\nu\rightarrow 0}\sup_{\ell\in[\ell_\nu,\ell_I]}\left|\frac{1}{\ell}S_{||}(\ell)+\frac{2}{5}\mathbf{E}\int (|u|^2 + 2p)u\cdot\nabla\psi dx + \frac{4}{5}\mathbf{E}\int\psi u\cdot Zdx \right| = 0 \label{L451}
\end{equation}
\end{thm}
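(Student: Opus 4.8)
\emph{Strategy.} I would read \eqref{L431}--\eqref{L451} as a statistically stationary, $\psi$-localized Kármán--Howarth--Monin identity: the third-order structure functions $S_0(\ell),S_{||}(\ell)$ get identified, \emph{exactly}, with a Duchon--Robert energy flux through the scale $\ell$ computed with a spherically symmetric ``top-hat'' mollifier, and that flux is then evaluated through the stationary local energy balance as $\nu\to0$ along a sequence of cut-off scales $\ell_\nu\to0$. Two features make this work: the forcing $Z$ has continuous sample paths, so $u^\nu$ is absolutely continuous in time and stationarity annihilates the time derivative with \emph{no} Itô correction; and \hyperref[a1]{Assumptions 1}--\hyperref[a2]{2} supply, uniformly in $\nu$, exactly the Constantin--E--Titi / Duchon--Robert commutator estimates needed to pass to the limit. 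Note that \hyperref[a3]{Assumption 3} is not used for this statement.

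\emph{Step 1: the flux equals the structure function.} Let $\varphi$ be a smooth, radial approximation of $\mathbf{1}_{B_1}/|B_1|$ with $\int\varphi=1$; write $f_\ell=\varphi_\ell*f$, $\delta_r f(x)=f(x+r)-f(x)$, $\tau_\ell(u)=(u\otimes u)_\ell-u_\ell\otimes u_\ell$. Mollifying \eqref{eqn:NSE} and pairing with $u^\nu_\ell$ yields the local energy balance
\begin{equation}
\partial_t\tfrac12|u^\nu_\ell|^2+\nabla\cdot G^\nu_\ell=\nu\Delta\tfrac12|u^\nu_\ell|^2-\nu|\nabla u^\nu_\ell|^2-D_\ell(u^\nu)+u^\nu_\ell\cdot Z_\ell,\qquad D_\ell(u^\nu)(x)=\tfrac14\!\int\nabla\varphi_\ell(r)\cdot\delta_r u^\nu(x)\,|\delta_r u^\nu(x)|^2\,dr,
\end{equation}
with $G^\nu_\ell$ an explicit flux whose commutator part is controlled by $\tau_\ell(u^\nu)$ and which converges to $(\tfrac12|u^\nu|^2+p^\nu)u^\nu$ in $L^1$. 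As $\varphi\to\mathbf{1}_{B_1}/|B_1|$ the measure $\nabla\varphi_\ell$ concentrates on $\{|r|=\ell\}$, and a direct spherical-coordinate computation gives the exact identity $\mathbf E\int\psi D_\ell(u^\nu)\,dx=-\tfrac{3}{4\ell}S_0(\ell)$ (and, with $(\delta_r u\cdot\hat r)^3$ in place of $\delta_r u\,|\delta_r u|^2$, a longitudinal flux $D_\ell^{||}$ with $\mathbf E\int\psi D_\ell^{||}(u^\nu)\,dx=-\tfrac{3}{4\ell}S_{||}(\ell)$); the regularization error is controlled, uniformly in $\ell$ and $\nu$, by the equicontinuity of $r\mapsto\mathbf E\int\psi\,\delta_r u^\nu|\delta_r u^\nu|^2\,dx$ from \hyperref[a1]{Assumption 1}. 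So no ``deconvolution'' of the mollified flux is needed: it \emph{is} $S_0(\ell)/\ell$ up to an explicit constant.

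\emph{Step 2: stationarity and the vanishing-viscosity limit.} Testing the balance with $\psi$, taking expectations, using stationarity ($\mathbf E\int\psi\,\partial_t|u^\nu_\ell|^2\,dx=0$) and integrating by parts gives
\begin{equation}
\mathbf E\!\int\psi D_\ell(u^\nu)\,dx=\mathbf E\!\int\nabla\psi\cdot G^\nu_\ell\,dx+\tfrac{\nu}{2}\mathbf E\!\int\Delta\psi\,|u^\nu_\ell|^2\,dx-\nu\,\mathbf E\!\int\psi\,|\nabla u^\nu_\ell|^2\,dx+\mathbf E\!\int\psi\,u^\nu_\ell\!\cdot\!Z_\ell\,dx.
\end{equation}
Choose $\ell_\nu\to0$ with $\nu\ell_\nu^{-2}\to0$ (e.g.\ $\ell_\nu=\nu^{1/4}$) and restrict $\ell\in[\ell_\nu,\ell_I]$. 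Young's inequality and the uniform bound $\sup_\nu\mathbf E\|u^\nu\|_{L^2}^2<\infty$ (a consequence of \hyperref[a1]{Assumption 1}) give $\nu\,\mathbf E\|\nabla u^\nu_\ell\|_{L^2}^2\le C\nu\ell^{-2}\le C\nu\ell_\nu^{-2}\to0$, so the viscous terms and the $\Delta\psi$ term drop out uniformly over the window. For the two surviving terms, \hyperref[a1]{Assumptions 1} and \hyperref[a2]{2} yield, via $\mathbf E\|u^\nu_\ell-u^\nu\|_{L^3}^3+\mathbf E\|p^\nu_\ell-p^\nu\|_{L^{3/2}}^{3/2}+\mathbf E\|\tau_\ell(u^\nu)\|_{L^{3/2}}^{3/2}\lesssim\sup_{|r|\le\ell}\mathbf E\|\delta_r u^\nu\|_{L^3}^3+\sup_{|r|\le\ell}\mathbf E\|\delta_r p^\nu\|_{L^{3/2}}^{3/2}$ (both $\to0$ as $\ell\to0$ uniformly in $\nu$), that $\mathbf E\int\nabla\psi\cdot G^\nu_\ell\,dx\to\tfrac12\mathbf E\int(|u^\nu|^2+2p^\nu)u^\nu\cdot\nabla\psi\,dx$ and $\mathbf E\int\psi\,u^\nu_\ell\cdot Z_\ell\,dx\to\mathbf E\int\psi\,u^\nu\cdot Z\,dx$, with rates independent of $\nu$ (the only genuinely $\nu$-free ingredient besides the assumptions being the fixed law of $Z$). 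Combined with Step 1,
\begin{equation}
\tfrac1\ell S_0(\ell)=-\tfrac43\,\mathbf E\!\int\psi D_\ell(u^\nu)\,dx+o(1)=-\tfrac23\mathbf E\!\int(|u^\nu|^2+2p^\nu)u^\nu\cdot\nabla\psi\,dx-\tfrac43\mathbf E\!\int\psi\,u^\nu\cdot Z\,dx+o(1)
\end{equation}
uniformly for $\ell\in[\ell_\nu,\ell_I]$, with $o(1)\to0$ as $\nu\to0$ and then $\ell_I\to0$; this is \eqref{L431}. The longitudinal law \eqref{L451} follows verbatim using $D_\ell^{||}$, the divergence-free constraint $\nabla\cdot\delta_r u=0$ accounting for the change of the constants from $(\tfrac23,\tfrac43)$ to $(\tfrac25,\tfrac45)$.

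\emph{Main obstacle.} Each individual limit is routine; the real work is their uniformity over the whole inertial band $[\ell_\nu,\ell_I]$ simultaneously with $\nu\to0$, and it turns on two points. Conceptually, one must evaluate the flux at the inertial scale $\ell$ itself — mollifying at scale $\ell$ \emph{before} letting $\nu\to0$ — so that the mollification removes the dissipation-range gradients that otherwise make the viscous term $O(1)$; it is this choice, rather than any cancellation, that keeps the anomalous dissipation from appearing in this (pre-\hyperref[a3]{Assumption 3}) version of the law. Technically, the three error sources — the Reynolds stress $\tau_\ell(u^\nu)$, the pressure increment, and the top-hat regularization — must all be controlled with moduli that do not deteriorate as $\nu\to0$, which is exactly the content of ``on average'' precompactness in $L^3$ together with \hyperref[a2]{Assumption 2}; without it the errors, though $o(1)$ for each fixed $\nu$, could not be made uniformly small along the vanishing-viscosity limit. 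A secondary care is that $\ell_\nu$ must be chosen $\gg\sqrt\nu$ (to kill the viscous term) yet $\to0$, which leaves ample room.
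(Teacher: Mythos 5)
Your route to \eqref{L431} is genuinely different from the paper's (which proves a localized von K\'arm\'an--Howarth--Monin relation, \hyperref[KHM]{Lemma 2.4}, and tests it with the isotropic tensor $\phi(|h|)I$), and for the $4/3$ part it is plausible: with the top-hat kernel one indeed gets $\mathbf{E}\int\psi D_\ell\,dx=-\tfrac{3}{4\ell}S_0(\ell)$, and the constants close. Two repairs would still be needed there: the balance you display is a hybrid that is not an exact identity --- the Duchon--Robert flux $D_\ell$ belongs to the balance for $\tfrac12\,u\cdot u_\ell$, whose viscous term is $\nu\nabla u\cdot\nabla u_\ell$ (so you need the stationary bound \eqref{stb12} and $\ell_\nu\gg\nu^{1/2}$, not merely $\mathbf{E}\|u^\nu\|_{L^2}^2\le C$), whereas the balance for $\tfrac12|u_\ell|^2$ has flux $-\nabla u_\ell:\tau_\ell$, not $D_\ell$; and the non-smooth top-hat must be reached by smooth approximation, which your equicontinuity remark (via \hyperref[a1]{Assumption 1}) can handle. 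These are fixable.

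The genuine gap is \eqref{L451}. The longitudinal law does not ``follow verbatim using $D_\ell^{||}$'': the cubic term that the equation actually produces under point-splitting with a \emph{radial scalar} kernel is $\int\nabla\varphi_\ell(r)\cdot\delta_r u\,|\delta_r u|^2\,dr$, whose angular structure is exactly $(\delta_r u\cdot\hat r)|\delta_r u|^2$, i.e.\ $S_0$; no choice of scalar kernel yields $(\delta_r u\cdot\hat r)^3$, so there is no local energy balance whose flux is $S_{||}$. Passing from $S_0$ to $S_{||}$ without homogeneity or isotropy is not a matter of ``$\nabla\cdot\delta_r u=0$ changing the constants'': it requires the tensorial version of the KHM relation with the test tensor $\varphi(|h|)\,\hat{h}\otimes\hat{h}$, and this is precisely where the paper does most of its work. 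The resulting distributional ODE in $\ell$ couples $S_{||}$ to $S_0$ (so the already-proved $4/3$ law must be fed back in through the term $\ell^{-5}\int_0^\ell\tau^3S_0(\tau)\,d\tau$), and a pressure term appears that is controlled neither as $T_h\nabla p$ (not covered by \hyperref[a2]{Assumption 2}) nor, after integrating by parts in $x$, as a term involving $\nabla u:(\hat{h}\otimes\hat{h})$ (not covered by \hyperref[a1]{Assumption 1}); the paper disposes of it only via the identity $\hat h_i\hat h_j=\delta^{ij}-|h|\partial_{h_i}\hat h_j$ and a further integration by parts in $h$, producing the extra term $\tilde P_2$ whose contribution $\tfrac{4}{15}\bar P_1(0)$ is needed to land on the coefficients $2/5$ and $4/5$. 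None of this machinery (or any substitute for it) appears in your argument, so as written the proposal establishes, after the repairs above, only \eqref{L431} and not \eqref{L451}.
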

Under the additional assumption of local energy equality, the above theorem reduces to a relation between the local structure functions and the local dissipation:
\begin{thm}\label{Thm2}
Let $\{u^\nu\}_{\nu>0}$ be as in \hyperref[Thm1]{Theorem 1.1} and also satisfying \hyperref[a3]{Assumption 3}. Then for the same $\ell_\nu$ as in \hyperref[Thm1]{Theorem 1.1}, we have:
\begin{equation}\lim_{\ell_I\rightarrow 0}\limsup_{\nu\rightarrow 0}\sup_{\ell\in[\ell_\nu,\ell_I]}\left|\frac{1}{\ell}S_0(\ell)+\frac{4}{3}\nu\mathbf{E}\int\psi|\nabla u|^2 dx\right| = 0,\label{L432}\end{equation}
\begin{equation}\lim_{\ell_I\rightarrow 0}\limsup_{\nu\rightarrow 0}\sup_{\ell\in[\ell_\nu,\ell_I]}\left|\frac{1}{\ell}S_{||}(\ell)+\frac{4}{5}\nu\mathbf{E}\int\psi|\nabla u|^2 dx\right|=0.\label{L452}
\end{equation}
\end{thm}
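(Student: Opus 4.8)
The plan is to deduce Theorem~\ref{Thm2} directly from Theorem~\ref{Thm1}: once the local energy equality \eqref{LEE} is used to rewrite the flux and noise terms appearing in \eqref{L431} and \eqref{L451} in terms of the local dissipation, the only thing left to verify is that the additional viscous term produced by this substitution is negligible in the limit $\nu\to0$. No new analysis of the structure functions themselves is needed, since the extraction of their $O(\ell)$ behaviour has already been carried out in Theorem~\ref{Thm1}.

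Concretely, fix $\nu\in(0,1)$. Because $u^\nu$ is stationary, $\mathbf{E}\int\psi\,u^\nu\cdot Z_t\,dx$ does not depend on $t$, so it equals the quantity written $\mathbf{E}\int\psi u\cdot Z\,dx$ in Theorem~\ref{Thm1}. Rearranging \eqref{LEE},
\begin{multline}
\mathbf{E}\int(|u^\nu|^2+2p^\nu)(u^\nu\cdot\nabla\psi)\,dx+2\,\mathbf{E}\int\psi\,u^\nu\cdot Z\,dx\\
=2\nu\,\mathbf{E}\int\psi|\nabla u^\nu|^2\,dx-\nu\,\mathbf{E}\int|u^\nu|^2\Delta\psi\,dx.
\end{multline}
Multiplying this identity by $\tfrac{2}{3}$, its left-hand side is exactly the sum of the flux and noise contributions inside the modulus in \eqref{L431}; hence, for every $\ell\in[\ell_\nu,\ell_I]$, the quantity $\tfrac{1}{\ell}S_0(\ell)+\tfrac{4}{3}\nu\,\mathbf{E}\int\psi|\nabla u^\nu|^2\,dx$ equals the expression inside the modulus in \eqref{L431} plus the residual term $\tfrac{2}{3}\nu\,\mathbf{E}\int|u^\nu|^2\Delta\psi\,dx$. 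The analogous statement holds for $S_{||}$ after replacing $\tfrac{2}{3},\tfrac{4}{3}$ by $\tfrac{2}{5},\tfrac{4}{5}$. By Theorem~\ref{Thm1}, $\limsup_{\nu\to0}\sup_{\ell\in[\ell_\nu,\ell_I]}$ of the modulus of the first of these two pieces tends to $0$ as $\ell_I\to0$.

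It therefore remains only to show that the residual $\nu\,\mathbf{E}\int|u^\nu|^2\Delta\psi\,dx$ is negligible, uniformly in $\ell$ — which is clear since it does not depend on $\ell$ at all. As $\Delta\psi$ is bounded and $D$ is bounded, Hölder's inequality followed by Jensen's inequality shows that the modulus of $\mathbf{E}\int|u^\nu|^2\Delta\psi\,dx$ is at most $\|\Delta\psi\|_{L^\infty}|D|^{1/3}(\mathbf{E}\|u^\nu\|_{L^3(D)}^3)^{2/3}\leq\|\Delta\psi\|_{L^\infty}|D|^{1/3}C^{2/3}$, where the last step uses the first bound of \hyperref[a1]{Assumption 1}; hence $\nu\,\mathbf{E}\int|u^\nu|^2\Delta\psi\,dx\to0$ as $\nu\to0$. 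Combining these facts via the triangle inequality, the supremum over $\ell\in[\ell_\nu,\ell_I]$ of the modulus of $\tfrac{1}{\ell}S_0(\ell)+\tfrac{4}{3}\nu\,\mathbf{E}\int\psi|\nabla u^\nu|^2\,dx$ is at most the supremum over the same range of the modulus of the Theorem~\ref{Thm1} expression, plus the constant $\tfrac{2}{3}\nu\,\|\Delta\psi\|_{L^\infty}|D|^{1/3}C^{2/3}$; letting $\nu\to0$ (which sends this last constant to $0$) and then $\ell_I\to0$ (which sends the first supremum to $0$ by Theorem~\ref{Thm1}) yields \eqref{L432}, and \eqref{L452} follows identically from \eqref{L451}.

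The argument is essentially corollary-level, and the only point needing attention — the \say{main obstacle} — is that the residual viscous term be controlled uniformly in $\nu$ (and trivially in $\ell$), so that it can be separated from the supremum over scales before the limits in $\nu$ and $\ell_I$ are taken; given Theorem~\ref{Thm1} and the uniform $L^3$ bound of \hyperref[a1]{Assumption 1} this is immediate. Note that the residual carries an explicit factor $\nu$ and so drops out of the inviscid limit, whereas the dissipation $\nu\,\mathbf{E}\int\psi|\nabla u^\nu|^2\,dx$ is retained precisely because it need not vanish — this is the anomalous dissipation that the local $4/5$ and $4/3$ laws are meant to express.
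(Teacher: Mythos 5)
Your argument is correct and is essentially the paper's own: the paper proves Theorem 1.2 by observing that the parenthesized terms in the intermediate relations \eqref{end:L43} and \eqref{e451} are precisely the right-hand side of \eqref{LEE}, which is the same substitution you perform. The only cosmetic difference is that the paper keeps $\nu\bar{G}(0)=\nu\mathbf{E}\int|u|^2\Delta\psi\,dx$ inside those relations so the identification with \eqref{LEE} is exact, whereas you start from the stated form of Theorem 1.1 (where that term has already been dropped) and must re-absorb it as a residual, which you correctly dispose of since it carries an explicit factor of $\nu$ and \hyperref[a1]{Assumption 1} bounds $\mathbf{E}\|u^\nu\|_{L^2_x}^2$ uniformly in $\nu$.
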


The scale $\ell_\nu$ is only an upper bound on the dissipative scale, at which the viscous forces are dominant. The scale variable $\ell_I$ which we use in our statements should not be confused with the integral scale of the flow. Its role is to eliminate the influence of the large scale forcing and it corresponds to the variable $\ell$ in equation (6.57) in \cite{Frisch} (see the discussion therein).

We refer to both \eqref{L431} and \eqref{L432} as the \textit{local 4/3 law}, and similarly \eqref{L451} and \eqref{L452} will be referred to as the \textit{local 4/5 law}. The derivations of the above laws on the basis of Assumptions \hyperref[a1]{1},\hyperref[a2]{2} and \hyperref[a3]{3} are presented in sections \hyperref[sec:p43]{3} and \hyperref[sec:45]{4} respectively. Note that in order for \hyperref[Thm1]{Theorem 1.1} to be physically relevant, we would reasonably require a condition guaranteeing the existence of a sequence $\nu_k$ of positive numbers decreasing to $0$ such that
$$\lim_{k\rightarrow\infty}\mathbf{E}\int\psi Z_0\cdot u^{\nu_k} dx >0,$$
which would mean that energy is being input in the system at least at a rate independent of the Reynolds number. Had we driven the Navier-Stokes equations by white-in-time forcing, this would be unnecessary since in that case it follows by Ito's formula that the rate of energy input is given by the trace of the covariance of the noise.

\subsection{Discussion and context of our work}
We present first the notion of solution we will be concerned with.
\begin{definition}A \textit{martingale solution} to \eqref{eqn:NSE} consists of a stochastic basis $(\Omega, \mathcal{F}, \{\mathcal{F}_t\}_{t\in[0,T]},\mathbf{P})$ on which we have a family of independent Brownian motions $\{\beta_k\}_{k=1}^\infty$ and a progressively measurable process $(u,Z_t):[0,T]\times\Omega \rightarrow H\times H,$ whose paths are $\mathbf{P}-$almost surely in
$$\left(C([0,T];D(A^{-\frac{\alpha}{2}}))\cap L^\infty(0,T;H)\cap L^2(0,T;V)\right)^2$$
for some $\alpha>1,$ for which we have:
$$dZ_t = -AZ_tdt + \sum_{k=1}^\infty \sigma_k e_k d\beta_k$$
and such that $\mathbf{P}$-almost surely, for every smooth divergence free vector field $v$ with compact support in $D$ we have:
\begin{align}\left<u(t),v\right> + \int_0^t \left<u(s)\cdot\nabla u(s),v\right>ds + \nu\int_0^t\left<\nabla u(s),\nabla v\right>ds = \left<u(0),v\right>+ \int_0^t\left<Z_s,v\right>ds, \notag\end{align}
where $\left<\cdot,\cdot\right>$ denotes the $L^2$ inner product.\\
A \textit{stationary martingale solution} to \eqref{eqn:NSE} is a martingale solution such that for every $\tau >0$, the paths $\left(u(\cdot+\tau),Z_{\cdot+\tau}\right)$ and $\left(u(\cdot),Z_\cdot\right)$ coincide in law.
\label{D1}
\end{definition}
The reader can consult \cite{FG} to examine exactly how $D(A^s)$ is defined (for positive or negative values of $s$) in terms of series expansions with respect to an eigenbasis of $A$ in $H.$ Note that $Z_t$ is acted upon by $L,$ which has a specific relation to $A$ as mentioned earlier, while by taking the (equivalent) Leray projected version of the equation for $u$ we see that $A$ itself acts on $u$. This is why both $u$ and $Z_t$ can be seen in our context in terms of series expansions with respect to an eigenbasis of $A$.

We will frequently abuse the terminology of the above definition and refer to the process $u$ itself as a martingale solution. According to \hyperref[D1]{Definition 1.2}, in constructing martingale solutions we are allowed to pick a probability space and a $Q$-Wiener process on it, for which there is a process $(u_t,Z_t)$ solving \eqref{eqn:NSE} in the weak sense, even if finding such a solution is not possible in some a priori given probability space. We note that even though the equation for $Z_t$ has a unique invariant measure, as the viscosity $\nu$ varies we obtain solutions $(u_t^\nu,Z_t^\nu)$ where the processes $Z_t^\nu$ are only equal to each other in law. This difference is not significant in what follows and we caution the reader that we will mostly ignore it, omitting the index $\nu$ and denoting all of the $Z_t^\nu$ processes by $Z_t.$

In the context of weak solutions to the deterministic Euler and Navier-Stokes equations, a \textit{defect measure} was introduced in \cite{DR}, which is a distribution depending on the solution $u$ that accounts for loss\footnote{or creation; it is now known that there are weak solutions attaining any energy profile; see \cite{BDLSV}. However, the class of Leray-Hopf solutions is not known to have this pathology.} of kinetic energy not due to viscosity but due to lack of smoothness. The defect measure is a limit of expressions related to $s_0(\ell)$ and a $4/3$ law relating the structure function to the defect (but not to the local viscous dissipation) is derived in the same article assuming the existence of the limit $\lim_{\ell\rightarrow 0}(\ell^{-1}s_0(\ell))$. In a similar direction, \cite{Ey2} later derived scaling laws for various third order structure functions, including a $4/5$ law. These works provide relations between the structure functions and the defect measure at fixed Reynolds number (which may be infinite) and without any reference to the inertial range whereas in this article we study the limit of infinite Reynolds number within an appropriate range of scales, without concerning ourselves with an emerging solution of Euler as $\nu\rightarrow 0$. Other differences are that here we study a randomly driven Navier-Stokes system and that we are dealing with boundaries, complicating things for instance through the presence of terms involving the pressure which cannot be simply estimated by means of the Calderon-Zygmund inequality (see the discussion on \hyperref[a2]{Assumption 2} below). More recent, related but improved results for the Euler equations in bounded domains can be found in \cite{DN} and \cite{BTW}. The common thread between all of these works as well as the present one is carefully performing local calculations with energy fluxes on the basis of the equation, under different assumptions each time.

More recently, it was proven in \cite{45} that the $4/5$ law holds for stationary martingale solutions of the Navier-Stokes system on $\mathbb{T}^3$ with white in time forcing, under a very weak condition which the authors called \say{weak anomalous dissipation}. Classically, one defines the dimensionless energy dissipation rate $\ep_\nu:=\nu L \E\|\nabla u\|_{L^2}^2/U^3$ for a characteristic length scale $L$ and a characteristic velocity $U$. For instance, $L$ is taken to be the diameter of the domain or the integral scale, while $U$ is taken to be $(\E\|u^\nu\|_{L^2}^2)^{1/2}$. Anomalous dissipation is the phenomenon that $\ep_\nu$ remains bounded below by a strictly positive number as $\nu\to0.$ If the kinetic energy (and thus $U$) remains bounded, anomalous dissipation means that $\nu\E\|\nabla u^\nu\|_{L^2}^2$ is also nonvanishing. Weak anomalous dissipation is then stated as a slight weakening of the boundedness of $\E\|u^\nu\|_{L^2}^2$ as $\nu\to 0$:
\begin{definition}
A family $\{u^\nu\}_{\nu>0}$ of stationary martingale solutions to \eqref{eqn:NSE} is said to satisfy \textit{weak anomalous dissipation} if:
\begin{equation}\tag{WAD}
\lim_{\nu\rightarrow 0}\nu\mathbf{E}\|u^\nu\|_{L_x^2}^2 = 0.\label{WAD}\end{equation}
\end{definition}
This is equivalent to the vanishing, in the inviscid limit, of the so-called Taylor microscale, introduced in \cite{Tay}. The derivation of the $4/3$ and $4/5$ laws in \cite{45} uses a relation between the third order structure functions and the two-point correlation matrix of $u$ usually referred to as the von Kármán–Horwath-Monin relation. The terms involving two-point correlations descend from the viscous terms of \eqref{eqn:NSE} and can therefore be treated through \eqref{WAD}, furnishing the results. The same line of reasoning does not seem to work in a manner conducive to the derivation of analogues of the $4/3$ and $4/5$ laws in our case. This is because in the analogue of the von Kármán-Horwath-Monin relation
one obtains multiple terms in which $\nu$ does not appear and which unfortunately can not be treated solely on the basis of \eqref{WAD}.

We now give a discussion of our assumptions. For fixed $\nu,$ 
\hyperref[a1]{Assumption 1} provides nothing new. This follows from \eqref{stb3} for $p = q = r = 3.$
Keeping in mind the Kolmogorov-Riesz characterization of compactness in $L^p$ over Euclidean spaces, \hyperref[a1]{Assumption 1} can be informally described as: \textit{the family $\{u^\nu\}_{\nu\in(0,1)}$ is precompact in $L^3(D)$ \say{on average}.} This viewpoint is followed in the proof of \hyperref[IL]{Proposition 1}. Note that the uniform boundedness of $\mathbf{E}\|u^\nu\|_{L^3(D)}^3$ is already a stronger condition than the uniform boundedness of $\mathbf{E}\|u^\nu\|_{L^2(D)}^2,$ which in turn is clearly stronger than \eqref{WAD}. Therefore, in our proofs below, in places where we only need to use weak anomalous dissipation we will clarify it, in order to be clear about where our first assumption really comes into play. 
\begin{remark}\label{rmk1}
In \cite{45} an example is given of a family (parametrized by $\nu$) of Navier-Stokes systems on $\mathbb{T}^3$, driven by a degenerate white-in-time Gaussian noise, and a family of stationary martingale solutions thereof which does not satisfy \eqref{WAD}. Thus in that case \hyperref[a1]{Assumption 1} is also not valid. To see that a similar phenomenon might still occur in the case of Ornstein-Uhlenbeck external forcing, consider \eqref{eqn:NSE} on $\mathbb{T}^3,$ where the driving noise is $(e^{ix_2},0,0)Z_t,$ where $Z_t$ solves the one dimensional SDE $dZ_t = -Z_tdt + d\beta_t,$ $\beta_t$ being a standard Brownian motion. One might construct a stationary martingale solution of 
$$\partial_tu=\nu\Delta u+Z_t(e^{ix_2},0,0)$$
of the form $u^\nu(t) = g(t) (e^{ix_2},0,0),$ where the statistically stationary process $g(t)$ solves the (random) ODE $g'(t) = -g(t)+Z_t.$ For this family, we have:
$$\nu\E\|u^\nu(t)\|_{L^2}^2 = 2\E Z_tg(t),$$
whereby taking the limit $t\to\infty$ and using elementary stochastic calculus one finds that the right hand side is equal to $\frac{1}{\nu+1}.$ Thus \eqref{WAD}, and by extension \hyperref[a1]{Assumption 1}, is not true. On the other hand, note that we just showed that the energy input is bounded away from $0$ in the vanishing viscosity limit. The fact that in this situation the energy input is nonvanishing as $\nu\to0$ (which is the most interesting case) and \hyperref[a1]{Assumption 1} does not hold is due to the degeneracy and low dimensionality of this particular system and does not preclude both of these being true for more complex Navier-Stokes systems. We merely showcase this example to exhibit that our assumptions may fail in the case of Ornstein-Uhlenbeck forcing in the same way as they might fail for white-in-time forcing.\end{remark}

Note that for the white-in-time forced Burgers equation it was shown in \cite{Eetal} (see also the review article \cite{ES} that the energy of the solution is bounded and the energy dissipation rate is nonvanishing as $\nu\to 0,$ both of which are stronger than \eqref{WAD}.

Any uniform in $\nu$ bound for $\mathbf{E}\|u\|_{B_{p,\infty}^\alpha}$ for some $p\geq 3, \alpha\in(0,1)$ would imply \hyperref[a1]{Assumption 1}. Note also that via the relation between structure functions and Besov norms, numerical and experimental evidence suggests the plausibility of this assumption, see \cite{Benzietal,Graueretal}.

\begin{remark}These ideas are also related to the Onsager phenomenology of turbulence. Even though they pertain to the Euler equations, they are still instructive in the study of the Navier-Stokes equations as well. In \cite{Ons} Onsager conjectured that if a weak solution of the Euler equations belongs to $C^{\alpha}$ for $\alpha\in(1/3,1)$ then it conserves energy, and that for $\alpha\in(0,1/3)$ there are weak solutions in $C^\alpha$ dissipating energy. The first part was proven in \cite{Ey1} for a norm stronger than $\|\cdot \|_{C^\alpha}$ and then settled in \cite{CET} in the sharper and more natural context of Besov spaces $B_{3,\infty}^\alpha$. The second part was settled completely by \cite{Is} (see also the review article \cite{BV} and references to earlier work therein). In \cite{DrEy} it is shown in the context of the deterministic Navier-Stokes equations that under a mild lower bound of the form $\sim\nu^\alpha$ on the mean dissipation rate of $u^\nu$ in the limit $\nu\rightarrow 0,$ the family $\{u^\nu\}_{\nu>0}$ can not be bounded in $B_{3,\infty}^{\sigma_\alpha+\epsilon}$ for $\epsilon >0$ and $\sigma_\alpha = \frac{1+\alpha}{3-\alpha}.$
These do not preclude the potential validity of our discussion since on-average precompactness in $L_x^3$ (which would correspond to uniform boundedness and vanishing of increments of $\{u^\nu\}$ in $L_t^3L_x^3$) is a weaker condition that may still be valid. To our knowledge, however, there are no known non-trivial, uniform in $\nu$ a priori estimates for solutions to \eqref{eqn:NSE} (i.e. ones that do not directly follow from \ref{stb12}).\end{remark}

With respect to \hyperref[a2]{Assumption 2}, its role is purely technical, in that in what follows we will need some bounds on the pressure that are uniform in $\nu.$ These estimates follow from \hyperref[a1]{Assumption 1} in the absence of boundaries: they are a consequence of the well known Calderon-Zygmund estimates. The presence of boundaries complicates things at this point. While for the Euler equations we can formally derive a Neumann problem for the pressure by taking the divergence of the equation for $u,$ in the Navier-Stokes setting such a Neumann problem involves the normal trace of $\Delta u,$ which requires a lot more regularity than what we are assuming here. An argument relying on the maximal regularity for the Stokes semigroup as found for instance in \cite[Chapter~5]{RRS} would provide us with bounds on the pressure, yet we do not know how to improve these to estimates uniform in $\nu$. Nevertheless, it is worth mentioning that in view of the aforementioned maximal regularity arguments we expect such estimates to be more readily obtained for a system driven by a process with better time regularity. This is another reason why we use an Ornstein-Uhlenbeck stochastic body force.

As for \hyperref[a3]{Assumption 3}, we note that it is related to the notion of suitable weak solutions for the deterministic Navier-Stokes equations, see \cite{CKN}. Instead of local energy inequality, here we assume \text{equality} but only for a particular spatial cutoff function $\psi.$ To formally derive this one proceeds as usual, i.e. with a \textit{space-time} dependent cutoff of the form $\psi(x)h(t)$ and uses the stationarity to cancel the boundary (in time) contributions\footnote{In using the phrase \say{boundary contributions} we imply that $h$ should be thought of as a placeholder for a sequence of nonnegative, compactly supported smooth functions approximating the constant function $1$ from below on their common domain.} from the integration of the term $\partial_t(\psi h)|u|^2.$ We also refer to the discussion in \cite{LS} in connection to \cite{Shv} for a sufficient condition for local energy equality up to the first blow-up time.

An interesting feature of our assumptions is that they imply the existence of a stationary martingale solution for the Euler equations, arising as a subsequential inviscid limit of $u^\nu$, for which the local scaling laws also hold. These solutions can be seen as a model for ideal turbulence incorporating the $4/3$ and $4/5$ laws in the local sense.

\begin{prop}\label{IL}Under assumptions \hyperref[a1]{1} and \hyperref[a2]{2}, there exists a stationary martingale solution $u^0$ of the Euler equations with associated pressure $p^0$ such that $u^\nu \to u^0$ in $L_{\omega,t,x}^3$ over any bounded time interval, and $u^\nu(0)\to u^0(0)$ in $L_{\omega,x}^3$. Moreover, we have:
\begin{equation}\lim_{\ell_I\to0}\sup_{\ell\in(0,\ell_I)}\left|\frac{1}{\ell}S_0(\ell)+\frac{2}{3}\E\int(|u^0|^2+2p^0)u^0\cdot\nabla\psi dx + \frac{4}{3}\E\int\psi u^0\cdot Z^0dx\right| = 0,\label{L43E}\end{equation}
\begin{equation}\lim_{\ell_I\rightarrow 0}\sup_{\ell\in(0,\ell_I)}\left|\frac{1}{\ell}S_{||}(\ell)+\frac{2}{5}\mathbf{E}\int (|u^0|^2 + 2p^0)u^0\cdot\nabla\psi dx + \frac{4}{5}\mathbf{E}\int\psi u^0\cdot Z^0dx \right| = 0\label{L45E}\end{equation}
\end{prop}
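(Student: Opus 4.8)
The plan is to extract the inviscid limit $u^0$ by a compactness argument and then pass to the limit in the local scaling laws of Theorems 1.1 and 1.2. First I would use Assumption 1, interpreted as ``on-average precompactness in $L^3$'' via the Kolmogorov--Riesz criterion: the uniform bound $\sup_\nu \E\|u^\nu\|_{L^3}^3 \le C$ together with the uniform smallness of increments $\sup_\nu \E\|\delta_h u^\nu\|_{L^3}^3 \to 0$ gives precompactness of the laws of $u^\nu$ in a suitable topology, e.g.\ in $L^3([0,T]\times D)$ equipped with strong spatial topology and some weak time topology, or more directly: from these bounds one gets tightness of the laws of $(u^\nu, Z^\nu)$ on an appropriate path space (using also the equation to control time regularity of $u^\nu$ in a negative Sobolev norm, as in the standard martingale-solution construction sketched in the Appendix). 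Then one applies Skorokhod's representation theorem to obtain, on a new probability space, a subsequence $u^{\nu_k} \to u^0$ almost surely in $L^3_{t,x}$ over bounded intervals, with $u^{\nu_k}(0) \to u^0(0)$ in $L^3_x$. The uniform integrability furnished by the $L^3$ bound (a bound on a strictly larger power than $1$) upgrades a.s.\ convergence to $L^3_{\omega,t,x}$ convergence. One similarly extracts $p^{\nu_k} \to p^0$ in $L^{3/2}_{\omega,t,x}$ using Assumption 2, and $Z^{\nu_k} \to Z^0$ where $Z^0$ is an Ornstein--Uhlenbeck process with the same law (its law does not depend on $\nu$).

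Next I would verify that the limit $(u^0, Z^0)$ is a stationary martingale solution of the Euler equations. Stationarity is inherited from the stationarity of each $u^{\nu_k}$ since convergence in law is preserved under time shifts. To pass to the limit in the weak formulation, the linear terms $\langle u, v\rangle$, $\langle Z, v\rangle$ and the initial datum are immediate from the modes of convergence above; the viscous term $\nu_k \int_0^t \langle \nabla u^{\nu_k}, \nabla v\rangle\,ds$ vanishes because $\nu_k \E\|\nabla u^{\nu_k}\|_{L^2}^2$ is bounded by \eqref{stb12} (which only requires the $L^2$ bound implied by Assumption 1), so after multiplying by $\nu_k$ it goes to zero; the nonlinear term $\int_0^t \langle u^{\nu_k}\cdot\nabla u^{\nu_k}, v\rangle\,ds = -\int_0^t \langle u^{\nu_k}\otimes u^{\nu_k}, \nabla v\rangle\,ds$ converges because $u^{\nu_k}\otimes u^{\nu_k} \to u^0\otimes u^0$ in $L^{3/2}_{t,x}$ (a.s., with uniform integrability from the $L^3$ bound), which is the one genuinely nonlinear passage to the limit and the place where strong $L^3$ convergence is essential. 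The martingale property of the limiting noise and the identification of the limiting driving process is handled by the usual martingale-problem argument (showing the relevant quadratic-variation functionals converge), exactly as in \cite{FG}.

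Finally, to obtain the scaling laws \eqref{L43E}, \eqref{L45E} for $u^0$, I would \emph{not} attempt to pass to the limit inside the statement of Theorem 1.1 directly at fixed $\ell$ (the scale $\ell_\nu$ collapses to $0$); instead I would revisit the proof of Theorem 1.1 and observe that the only place $\nu$-dependence and the lower cutoff $\ell_\nu$ enter is through the viscous/dissipative contributions, which disappear in the limit. Concretely, the analogue of the identity behind Theorem 1.1 at $\nu = 0$ holds for all $\ell \in (0,\ell_I)$ because for the Euler solution there is no dissipative scale: the local flux identity coming from the weak formulation tested against $\psi$-localized increments gives directly, for every $\ell>0$,
\begin{equation}
\frac{1}{\ell}S_0(\ell) = -\frac{2}{3}\E\int(|u^0|^2+2p^0)u^0\cdot\nabla\psi\,dx - \frac{4}{3}\E\int\psi u^0\cdot Z^0\,dx + o(1)_{\ell\to 0},
\end{equation}
where the $o(1)$ collects the increment terms controlled by Assumptions 1 and 2 at $\nu = 0$ (now valid for $u^0, p^0$, since these bounds pass to the limit by Fatou), together with the $\ell_I$-dependent boundary-forcing term that is killed by the $\ell_I \to 0$ limit — and similarly for $S_{||}$ with $2/5$, $4/5$. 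In practice the cleanest route is: write the fixed-$\nu$ estimate from Theorem 1.1 with its explicit error bounds, take $\limsup_{\nu\to 0}$ using the established convergences $u^{\nu_k}\to u^0$, $p^{\nu_k}\to p^0$, $Z^{\nu_k}\to Z^0$ to identify each term with its $u^0$-counterpart, and note that the $\sup_{\ell\in[\ell_\nu,\ell_I]}$ becomes $\sup_{\ell\in(0,\ell_I)}$ since $\ell_\nu\to 0$ and the error bounds are uniform down to arbitrarily small scales.

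The main obstacle I anticipate is the nonlinear passage to the limit in the pressure term $\E\int p^{\nu_k} u^{\nu_k}\cdot\nabla\psi\,dx$: this requires $p^{\nu_k}u^{\nu_k} \to p^0 u^0$ in $L^1_{\omega,t,x}$, which follows from $p^{\nu_k}\to p^0$ in $L^{3/2}$ and $u^{\nu_k}\to u^0$ in $L^3$ (Hölder conjugate exponents $3/2$ and $3$) \emph{provided} the two convergences can be realized on the \emph{same} Skorokhod space simultaneously — so care is needed to set up a single tightness/Skorokhod argument for the joint family $(u^\nu, p^\nu, Z^\nu)$ rather than treating the pressure separately. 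A secondary subtlety is ensuring that the error terms in Theorem 1.1 that depend on the modulus-of-continuity functions in Assumptions 1 and 2 are genuinely \emph{uniform} in $\nu$ down to $\ell = \ell_\nu$, so that after sending $\nu\to 0$ one legitimately obtains control on $\sup_{\ell\in(0,\ell_I)}$ rather than only on $\sup$ over a shrinking-from-below interval; this should already be implicit in the proof of Theorem 1.1 but must be invoked explicitly here.
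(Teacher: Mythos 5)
Your proposal follows essentially the same route as the paper: Skorokhod plus a Kolmogorov--Riesz-type compactness argument based on Assumptions 1 and 2 gives $u^\nu\to u^0$ in $L^3_{\omega,t,x}$, $p^\nu\to p^0$ in $L^{3/2}_{\omega,t,x}$ and $Z^\nu\to Z^0$ (done jointly, as you note, so the products $p^\nu u^\nu$ converge), the weak formulation passes to the limit with the viscous term killed via \eqref{stb12}, and \eqref{L43E}--\eqref{L45E} are deduced by combining these convergences with the already-proved statements \eqref{L431} and \eqref{L451} rather than re-deriving a flux identity at $\nu=0$. One small clarification: the upgrade from $\sup_{\ell\in[\ell_\nu,\ell_I]}$ to $\sup_{\ell\in(0,\ell_I)}$ is not because the $\nu$-level error bounds are uniform below $\ell_\nu$ (they are not), but because the limiting quantity is $\nu$-independent, so for each fixed $\ell>0$ one sends $\nu\to0$ (eventually $\ell\geq\ell_\nu$, and the $\frac{1}{\ell}$-weighted difference of structure functions vanishes at fixed $\ell$) and only afterwards takes the supremum over $\ell\in(0,\ell_I)$.
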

\begin{proof}We will pass to a subsequence of $\nu\in(0,1)$ multiple times and for convenience we will denote the resulting subsequence simply by $\nu$. One may use Skorokhod's embedding theorem to construct a new stochastic basis denoted by $(\Omega,\mathcal{F},\mathcal{F}_t,\mathbf{P})$ and with expectation $\E,$ as well as processes $u^\nu,p^\nu,Z_t^\nu$ on this stochastic basis such that $(u^\nu,Z_t^\nu)$ is a stationary martingale solution of \eqref{eqn:NSE} satisfying our assumptions, as well as a process $Z_t^0$ such that $Z_t^\nu\to Z_t^0$ in $L_{t,x}^2$ $\mathbf{P}$-almost surely. Then one can follow the proof of the Kolmogorov-Riesz compactness criterion (see e.g. \cite{HH}) to obtain a subsequence such that:
\begin{gather}\lim_{\nu\to 0}\E\int_0^T\|u^\nu(t)-u^0(t)\|_{L_x^3}^3dt = 0,\\
\lim_{\nu\to0}\E\|u^\nu(0)-u^0(0)\|_{L_x^3}^3 = 0,\\
\lim_{\nu\to0}\E\int_0^T\|p^\nu(t)-p^0(t)\|_{L_x^{3/2}}^{3/2}dt = 0,
\end{gather}
as well as, $\mathbf{P}$-almost surely:
\begin{gather}\lim_{\nu\to0}\int_0^T\|u^\nu(t)-u^0(t)\|_{L_x^3}^3dt = 0,\\
\lim_{\nu\to0}\|u^\nu(0)-u^0(0)\|_{L_x^3} = 0,\\
\lim_{\nu\to0}\int_0^T\|p^\nu(t)-p^0(t)\|_{L_x^{3/2}}^{3/2}dt.
\end{gather}
From the $\mathbf{P}$-a.s. convergences above it readily follows that $(u^0,Z^0)$ is a stationary martingale solution of the OU driven Euler system, i.e. $\mathbf{P}$-almost surely, for almost every $T$ and for all smooth divergence fields $v$ with compact support in $D$ we have:
\begin{equation}\left<u^0(T),v\right> + \int_0^T\left<u^0(s)\cdot\nabla u^0(s),v\right>ds = \left<u^0(0),v\right>+\int_0^T\left<Z_s^0,v\right>ds.\end{equation}
From the convergences in expectation in conjunction with \eqref{L431} and \eqref{L451} we obtain \eqref{L43E} and \eqref{L45E}.
Lastly, the continuity of the generalized normal trace operator $\Gamma_N:L_{\Div}^3(D)\to W^{-1/3,3}(\partial D)$ (see e.g. \cite{Sohr}) implies the no-penetration condition for $u^0.$
\end{proof}

\begin{remark}
In the deterministic case for domains with boundary, there have been some recent investigations regarding the problem of the inviscid limit. For instance, a modulus of continuity over a finite range of scales for the second order structure function has been shown to imply the existence of a subsequential inviscid limit in \cite{DN2}. Such a modulus is not necessarily implied by \hyperref[a1]{Assumption 1} as the latter has been stated, but would follow (over the entire range of scales) if a uniform bound in $B_{3,\infty}^\alpha$ was imposed as an assumption on $\{u^\nu\}_{\nu}$ instead.\end{remark}

We finally want to discuss some aspects of our work that we deem significant. First, we believe that a considerably different approach would be necessary in order to treat turbulence \textit{near} the boundary, since what we are doing always happens at a fixed distance away from it independently of the viscosity. Second, the conditions that we have assumed our solutions of \eqref{eqn:NSE} to satisfy seem very hard to verify, not least in light of \hyperref[rmk1]{Remark 1}. However, see \cite{LC} for a proof of a weak anomalous dissipation condition for the problem of passive scalar advection and the earlier mentioned \cite{Eetal} for boundedness of the energy in the inviscid limit for the stochastic Burgers equation. At any rate, provided that one can obtain relations similar to the ones we derive in the next section, we believe that similar steps can be taken to derive local cascade laws for other systems that are expected to exhibit this sort of behavior, for instance one could attempt to adapt the derivations of the scaling laws in \cite{2D} or \cite{LC} to bounded domains in the same way that our paper adapts the results in \cite{45}.

\section{Preliminary results}
We first set some notation that will be used throughout:
\begin{equation}\Gamma(t,h) = \mathbf{E}\int\psi(u\otimes T_h u)dx\label{tensorGamma}\end{equation}
\begin{equation}D(t,h)^k = \mathbf{E}\int\psi(\delta_h u\otimes\delta_hu)\delta_h u^kdx.\label{tensorD}\end{equation}
Here and in what follows, we use $T_hf$ to denote the translation of a function (or vector field) $f$ by $-h,$ i.e. $T_hf(x) = f(x+h).$
In what follows, for any order-$2$ tensor $\eta,$ we denote the $(i,j)$ component by $\eta^{ij}.$ 

We first state some basic facts about the regularity of the quantities defined above. The first result states that time averages of $S_0$ and $S_{||}$ are continuous functions of $\ell$. The second one states that time averages of $\Gamma$ are $C^2$ functions of $h$. 
\begin{lemma}\label{L2.1}
Fix $\nu>0$ and let $u$ be a martingale solution of \eqref{eqn:NSE}. The functions:
$$\ell\mapsto \frac{1}{T}\int_0^TS_0(t,\ell)dt,$$ 
$$\ell\mapsto \frac{1}{T}\int_0^TS_{||}(t,\ell)dt$$
are continuous for $\ell\in(0,\infty)$ and satisfy the bound:
\begin{equation}
\sup_{\ell\in(0,1)}\left|\frac{1}{T}\int_0^TS_0(t,\ell)dt\right| + \sup_{\ell\in(0,1)}\left|\frac{1}{T}\int_0^TS_{||}(t,\ell)dt\right| \lesssim  \mathbf{E}\|u\|_{L_x^3}^3.
\end{equation}
If in addition $u$ is a stationary martingale solution of \eqref{eqn:NSE}, $S_0$ and $S_{||}$ are time-independent, continuous in $\ell$ and satisfy:
\begin{equation}
\lim_{\ell\rightarrow0}S_0(\ell) = \lim_{\ell\rightarrow0}S_{||}(\ell) = 0.
\end{equation}
\end{lemma}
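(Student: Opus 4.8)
The plan is to prove everything softly, relying only on the a priori bound \eqref{stb3}, the strong continuity of translations on $L^3$, and the dominated convergence theorem, glued together by one elementary pointwise inequality. Throughout I would treat $u(\omega,t,\cdot)$ as extended by zero outside $D$, so that each translation acts on $L^3(\mathbb R^3)$ as an isometry and $\|\delta_{\ell\hat n}u\|_{L^3(D)}\le\|T_{\ell\hat n}u-u\|_{L^3(\mathbb R^3)}\le 2\|u\|_{L^3(D)}$.

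First I would record the algebraic inequality that for all $a,b\in\mathbb R^3$ and $\hat n\in S^2$,
\[
\bigl||a|^2(a\cdot\hat n)-|b|^2(b\cdot\hat n)\bigr|+\bigl|(a\cdot\hat n)^3-(b\cdot\hat n)^3\bigr|\le C\,(|a|^2+|b|^2)\,|a-b|,
\]
which follows from $|a|^2a-|b|^2b=|a|^2(a-b)+(|a|-|b|)(|a|+|b|)b$ and $(a\cdot\hat n)^3-(b\cdot\hat n)^3=(a\cdot\hat n-b\cdot\hat n)\bigl((a\cdot\hat n)^2+(a\cdot\hat n)(b\cdot\hat n)+(b\cdot\hat n)^2\bigr)$ together with $|a\cdot\hat n|\le|a|$; the case $b=0$ gives $\bigl||a|^2(a\cdot\hat n)\bigr|\le|a|^3$ and $|(a\cdot\hat n)^3|\le|a|^3$. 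Combined with Hölder's inequality in the form $\int\psi|f|^2|g|\,dx\le\|\psi\|_{L^\infty}\|f\|_{L^3}^2\|g\|_{L^3}$, the $b=0$ case immediately yields $\bigl|\int\psi|\delta_{\ell\hat n}u|^2(\delta_{\ell\hat n}u\cdot\hat n)\,dx\bigr|\le 8\|\psi\|_{L^\infty}\|u\|_{L^3(D)}^3$ — and the analogous bound for the $(\delta_{\ell\hat n}u\cdot\hat n)^3$ integrand — uniformly in $\ell>0$ and $\hat n\in S^2$. Averaging in $\hat n$, taking $\mathbf E$, and averaging over $[0,T]$ gives the asserted bound, which is finite by \eqref{stb3} with $p=q=r=3$. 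Crucially, this also supplies the $\ell$- and $\hat n$-uniform majorant $8\|\psi\|_{L^\infty}\|u(t)\|_{L^3}^3$, integrable on $S^2\times\Omega\times[0,T]$, that I will use in all the limiting arguments below.

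Next, for continuity in $\ell$ I would fix $\ell>0$ and $\ell_k\to\ell$ and argue by dominated convergence. For each fixed $(\omega,t,\hat n)$, strong continuity of translation on $L^3$ gives $\|\delta_{\ell_k\hat n}u-\delta_{\ell\hat n}u\|_{L^3}=\|T_{\ell_k\hat n}u-T_{\ell\hat n}u\|_{L^3}\to 0$, and then the displayed inequality plus Hölder show that the inner spatial integrals at $\ell_k$ converge to those at $\ell$; the majorant of the previous step turns this into convergence of $\frac1T\int_0^TS_0(t,\ell_k)\,dt$ and $\frac1T\int_0^TS_{||}(t,\ell_k)\,dt$, proving continuity on $(0,\infty)$. (The needed joint measurability in $(\hat n,\omega,t)$ is routine: it uses progressive measurability of $u$, the fact that $u\in L^3_{t,x}$ a.s.\ by \eqref{stb3}, and the continuity — indeed local Lipschitzness — of $(\hat n,v)\mapsto\int\psi|\delta_{\ell\hat n}v|^2(\delta_{\ell\hat n}v\cdot\hat n)\,dx$ on $S^2\times L^3(D)$, which is itself a consequence of the displayed inequality.) For the stationary part, stationarity (cf.\ Definition~\ref{D1}) forces $u(t+\tau)$ and $u(t)$ to have the same law in $L^3(D)$ for a.e.\ $t$; since the functional $v\mapsto\frac1{4\pi}\int_{S^2}\int\psi|\delta_{\ell\hat n}v|^2(\delta_{\ell\hat n}v\cdot\hat n)\,dx\,dS(\hat n)$ is continuous, hence Borel, on $L^3(D)$ and is dominated by $C\|v\|_{L^3}^3\in L^1(\mathbf P)$, taking expectations shows $S_0(t+\tau,\ell)=S_0(t,\ell)$, and likewise for $S_{||}$, i.e.\ both are time-independent; continuity in $\ell$ is then the argument just given. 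Finally, the vanishing as $\ell\to 0$ is one more dominated-convergence argument: for each fixed $(\omega,\hat n)$ one has $\|\delta_{\ell\hat n}u(0)\|_{L^3}=\|T_{\ell\hat n}u(0)-u(0)\|_{L^3}\to 0$ by continuity of translation, and the majorant $8\|\psi\|_{L^\infty}\|u(0)\|_{L^3}^3$ closes it.

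I do not expect a real obstacle here: the only delicate points are adopting the convention that increments are taken with the zero extension of $u$ (so that translations act isometrically and the majorant is genuinely uniform in $\ell$), carrying out the measurability bookkeeping required to invoke Fubini and dominated convergence, and noting that in the stationary case ``time-independent'' must be read for a.e.\ $t$, since $u(t)\in L^3(D)$ only holds for a.e.\ $t$, almost surely.
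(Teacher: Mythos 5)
Your argument is correct: the uniform H\"older bound with the cubic majorant, strong continuity of translations in $L^3$, and dominated convergence (plus stationarity to remove the time dependence) is exactly the standard route, and the paper itself omits the proof, deferring to the analogous Lemmas 2.7--2.8 of \cite{45}, which proceed in the same soft way. Your explicit adoption of the zero extension outside $D$ (so translations act isometrically and the majorant is uniform in $\ell$) is precisely the kind of ``minor modification'' for the bounded-domain setting that the paper alludes to.
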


\begin{lemma}\label{L2.2}Let $\nu, u$ be as in \hyperref[L2.1]{Lemma 2.1}. For $i,j = 1,2,3,$ $\Gamma^{ij}$ is uniformly bounded and continuous. Moreover, the time averages $$h\mapsto \frac{1}{T}\int_0^T\Gamma^{ij}(t,h)dt$$ are in $C^2(\mathbb{R}^3).$ If $u$ is stationary, $\Gamma^{ij}(t,h) \equiv \Gamma^{ij}(h) \in C^2(
\mathbb{R}^3).$\end{lemma}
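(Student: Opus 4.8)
The plan is to work from the component form $\Gamma^{ij}(t,h)=\mathbf{E}\int_D\psi(x)\,u^i(t,x)\,u^j(t,x+h)\,dx$ and to control the dependence on the translation $h$. Since for a.e.\ $t$ one has $u(t)\in V\subset H^1_0(D)$, I would first replace $u(t)$ by its extension by zero $w(t)\in H^1(\mathbb{R}^3)$, which satisfies $\|w(t)\|_{L^2(\mathbb{R}^3)}=\|u(t)\|_{L^2(D)}$ and $\|\nabla w(t)\|_{L^2(\mathbb{R}^3)}=\|\nabla u(t)\|_{L^2(D)}$, so that $u^j(t,x+h)=T_hw^j(t,x)$ makes sense (and lies in $H^1(\mathbb{R}^3)$) for all $h$, including $x$ near $\partial D$. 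Uniform boundedness is then immediate from Hölder's inequality and \eqref{stb2}: $\sup_{t,h}|\Gamma^{ij}(t,h)|\le\|\psi\|_{L^\infty}\,\mathbf{E}\|u\|_{L^\infty_tL^2_x}^2<\infty$. For continuity in $h$ at fixed $t$, I would use continuity of translation in $L^2(\mathbb{R}^3)$ to obtain $\int_D\psi\,u^i\,T_{h'}w^j\,dx\to\int_D\psi\,u^i\,T_hw^j\,dx$ $\mathbf{P}$-a.s.\ as $h'\to h$, together with dominated convergence using the $h$-uniform majorant $\|\psi\|_{L^\infty}\|u(t)\|_{L^2}^2\in L^1(\mathbf{P})$ to pass the expectation through.

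The substantive point is the $C^2$ regularity of $F^{ij}(h):=\tfrac1T\int_0^T\Gamma^{ij}(t,h)\,dt$, and the plan is to differentiate twice in $h$ under $\mathbf{E}\int_0^T\int_D$. The first differentiation is harmless, since $\partial_{h_k}T_hw^j=T_h(\partial_kw^j)$ as an $L^2$-valued derivative and $\partial_kw^j\in L^2$; it is legitimate because the $x$-integral $\int_D\psi\,u^i\,T_h(\partial_kw^j)\,dx$ is dominated, uniformly in $h$, by $\tfrac12\|\psi\|_{L^\infty}\big(\|u(t)\|_{L^2}^2+\|\nabla u(t)\|_{L^2}^2\big)$, whose $\mathbf{E}\int_0^T$ is finite by \eqref{stb1} and \eqref{stb2}. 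The decisive move is to integrate by parts in $x$ \emph{before} differentiating a second time: as $\psi$ is smooth up to $\partial D$, the product $\psi u^i$ lies in $H^1_0(D)$, so its extension by zero $g^i$ lies in $H^1(\mathbb{R}^3)$ with compact support and
\[
\int_D\psi\,u^i\,T_h(\partial_kw^j)\,dx=\int_{\mathbb{R}^3}g^i\,\partial_{x_k}\!\big(T_hw^j\big)\,dx=-\int_{\mathbb{R}^3}(\partial_kg^i)\,T_hw^j\,dx,
\]
with no boundary term. Differentiating this once more in $h_l$ then sends the new derivative back onto the $w^j$ factor,
\[
\partial_{h_l}\partial_{h_k}F^{ij}(h)=-\frac1T\int_0^T\mathbf{E}\int_{\mathbb{R}^3}(\partial_kg^i)\,T_h(\partial_lw^j)\,dx\,dt,
\]
so that \emph{both} factors now carry exactly one derivative; the $x$-integral is dominated, uniformly in $h$, by $\|\partial_kg^i\|_{L^2}\|\nabla u(t)\|_{L^2}\lesssim\|\psi\|_{C^1}\|\nabla u(t)\|_{L^2}^2$ (using the Poincaré inequality on the bounded set $D$), whose $\mathbf{E}\int_0^T$ is finite by \eqref{stb1}. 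This justifies the second differentiation under the integral, and continuity of $h\mapsto\partial_{h_l}\partial_{h_k}F^{ij}(h)$ follows exactly as in the first paragraph, from $L^2$-continuity of translation applied to $\partial_lw^j$ and the same integrable majorant. Hence $F^{ij}\in C^2(\mathbb{R}^3)$.

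For the stationary case I would use that equality in law of the paths forces the one-time marginals to coincide, so $u(t)$ has the same law as $u(0)$ on $H$ for every $t$; since $v\mapsto\int_D\psi\,v^i\,(T_hv^j)\,dx$ is continuous on $H$ and bounded there by $\|\psi\|_{L^\infty}\|v\|_{L^2}^2$, its expectation does not depend on $t$, so $\Gamma^{ij}(t,h)\equiv\Gamma^{ij}(0,h)=F^{ij}(h)$, which is $C^2$ by the previous step. I expect the genuine obstacle to be exactly the second $h$-derivative: at face value it would require $u(t)\in H^2$, which none of the a priori bounds \eqref{stb1}--\eqref{stb3} supply, and it is the integration-by-parts step that circumvents this by trading a derivative on one copy of $u$ for a derivative on the other. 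The same bookkeeping explains why one gets \emph{exactly} $C^2$: the bilinear structure of $\Gamma$ lets each of the two factors of $u$ absorb one derivative, which is all that $u\in L^2(0,T;H^1)$ affords. A minor technical point I would still need to pin down is the behaviour of $\psi$ at $\partial D$ — enough regularity that $\psi u^i\in H^1_0(D)$ and the zero extensions behave as claimed — which is where I use that $\psi$ is smooth with bounded derivatives up to the boundary (as is tacitly required elsewhere in the paper, e.g.\ in \eqref{LEE}).
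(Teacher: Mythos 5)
Your argument is correct and follows essentially the same route the paper intends (it defers to Lemma 2.8 of \cite{45}): exploit the bilinear structure of $\Gamma$ so that each copy of $u$ absorbs exactly one derivative, using $u\in L^2(0,T;V)$ together with \eqref{stb1}--\eqref{stb2} to dominate and differentiate under the integral, and use stationarity to drop the time dependence. Your extension-by-zero of $u$ and of $\psi u^i$ to $\mathbb{R}^3$ and the integration by parts against the cutoff are precisely the ``minor modifications'' needed to transfer the torus argument of \cite{45} to the bounded domain, so there is no gap.
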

The proofs of the preceding lemmata closely follow those of Lemma 2.7 and Lemma 2.8 respectively in \cite{45} with only minor modifications and are therefore omitted. 

In order to establish local versions of the relevant third-order scaling laws we will give a localized variation of a traditional recipe. Kolmogorov himself in \cite{K41b} derived the $4/5$ law using a relation that according to Frisch's discussion in \cite{Frisch} was described for isotropic flows in \cite{KH} and later generalized to the anisotropic case in \cite{MY}, attributed mainly to Monin. This is a relation between the third order structure matrix and the correlation matrix of $u$, where these are given by \eqref{tensorD} and \eqref{tensorGamma} respectively when we take $\psi \equiv 1.$ Following Frisch, we call this relation the von Kármán-Howarth-Monin relation, in short KHM. The main ingredient of our take on Kolmogorov's recipe is a local version of the KHM relation. This is the content of \hyperref[KHM]{Lemma 2.4}.

Since our localization proceeds by means of multiplying the integrands of the classical structure functions by a smooth cutoff function $\psi\in C^\infty(D)$ of the spatial variable (cf. \eqref{tensorD}), we begin by calculating an expression for fluxes in the local structure matrix:
\begin{lemma}\label{23}Let $u \in L^3(D)$ be a divergence-free vector field. Define $\kappa=d(\text{supp}(\psi),\partial D).$ Then, in the sense of distributions in the $h$ variable for $|h|<\kappa$ we have:
\begin{align}
\sum_k\partial_{h_k}\int\psi(\delta_hu\otimes\delta_hu)\delta_hu^k dx &= 
\sum_k\partial_{h_k}\int\psi\left[(u\otimes T_hu) u^k + (T_hu\otimes u)u^k\right]dx \label{2} \\&-\sum_k\partial_{h_k}\int\psi\left[ (u\otimes T_hu) T_hu^k+(T_hu\otimes u) T_hu^k\right]dx\notag\\ &-\int T_h(u\otimes u)\delta_h u\cdot\nabla\psi dx \notag.
\end{align}\end{lemma}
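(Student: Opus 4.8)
The plan is to reduce the claim, by mollification, to the case of a smooth divergence-free field $u$, where the identity follows from incompressibility and a single integration by parts in $x$; since $\text{supp}\psi$ lies at distance $\kappa$ from $\partial D$ and only $|h|<\kappa$ is considered, the boundary never enters.

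First I would expand the left-hand side algebraically. Using $\delta_hu=T_hu-u$ one has $\delta_hu\otimes\delta_hu=T_hu\otimes T_hu-T_hu\otimes u-u\otimes T_hu+u\otimes u$, so that $\sum_k\partial_{h_k}\int\psi(\delta_hu\otimes\delta_hu)\delta_hu^k\,dx=A_1+A_2$ with
\begin{equation}
A_1:=\sum_k\partial_{h_k}\int\psi\bigl[(T_hu\otimes T_hu)+(u\otimes u)\bigr]\delta_hu^k\,dx,
\end{equation}
\begin{equation}
A_2:=-\sum_k\partial_{h_k}\int\psi\bigl[(T_hu\otimes u)+(u\otimes T_hu)\bigr]\delta_hu^k\,dx.
\end{equation}
Expanding $\delta_hu^k=T_hu^k-u^k$ inside $A_2$ reproduces precisely the first two groups of terms on the right-hand side of \eqref{2}; hence the whole content of the lemma is the identity $A_1=-\int T_h(u\otimes u)\,\delta_hu\cdot\nabla\psi\,dx$.

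To establish this I would assume for the moment that $u$ is smooth. In $A_1$ the $h$-independent piece $-(u\otimes u)u^k$ is annihilated by $\partial_{h_k}$, while $\sum_k\partial_{h_k}\int\psi(u\otimes u)\,T_hu^k\,dx=\int\psi(u\otimes u)\,(\Div u)(\cdot+h)\,dx=0$ by incompressibility, so $A_1=\sum_k\partial_{h_k}\int\psi\,T_h(u\otimes u)\,\delta_hu^k\,dx$. Differentiating under the integral sign and using $\partial_{h_k}\delta_hu^k=(\partial_ku^k)(\cdot+h)$, the Leibniz rule together with $\Div u=0$ kills the term $T_h(u\otimes u)(\Div u)(\cdot+h)$, and $\partial_{h_k}[T_h(u\otimes u)]=\partial_{x_k}[T_h(u\otimes u)]$. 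One integration by parts in $x$ — with no boundary contribution, since $\psi$ is compactly supported in $D$ and $|h|<\kappa$ — followed by one more use of $\Div(\delta_hu)=(\Div u)(\cdot+h)-\Div u=0$ then yields exactly $-\int T_h(u\otimes u)\,\delta_hu\cdot\nabla\psi\,dx$.

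Finally, for a general $u\in L^3(D)$ I would fix $\kappa'<\kappa$ and set $u_\varepsilon=u*\rho_\varepsilon$: for $\varepsilon$ small this is smooth and divergence-free on a neighbourhood of $\text{supp}\psi+\overline{B_{\kappa'}}$, and $u_\varepsilon\to u$ in $L^3$ of that neighbourhood. Each integral appearing above is multilinear and bounded in its $u$-arguments with respect to the $L^3$ norm, uniformly over $|h|<\kappa'$ (for instance $\|\delta_h(u_\varepsilon-u)\|_{L^3(\text{supp}\psi)}\le2\|u_\varepsilon-u\|_{L^3}$), so the functions of $h$ whose distributional derivatives occur on either side of the claimed identity converge uniformly on $\{|h|<\kappa'\}$; uniform convergence implies convergence in $\mathcal{D}'$, the identity for $u_\varepsilon$ passes to the limit, and letting $\kappa'\uparrow\kappa$ finishes the proof. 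I expect this last step to be the main obstacle: the smooth-field computation freely interchanges $\partial_{h_k}$ with integration, differentiates products of the components of $u$, and integrates by parts, none of which is legitimate for an $L^3$ field, so the mollification must be arranged carefully and each of the many terms matched correctly as $\varepsilon\to0$.
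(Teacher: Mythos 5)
Your proposal is correct, and the algebraic reduction (expanding $\delta_hu\otimes\delta_hu$ and $\delta_hu^k$, killing the $h$-independent term and the term $\sum_k\partial_{h_k}\int\psi(u\otimes u)T_hu^k\,dx$ by incompressibility) is exactly the paper's first step; where you diverge is in how the remaining term $\sum_k\partial_{h_k}\int\psi\,T_h(u\otimes u)\,\delta_hu^k\,dx$ is handled. You differentiate $T_h(u\otimes u)$ in $h$, rewrite this as an $x$-derivative, integrate by parts in $x$ and use $\Div(\delta_hu)=0$ — operations that require smoothness — and then recover the general case by mollifying $u$ on a neighbourhood of $\mathrm{supp}\,\psi+\overline{B_{\kappa'}}$ and passing to the limit via the multilinear $L^3$ bounds; your limiting argument (uniform convergence in $h$ on $\{|h|\le\kappa'\}$, hence convergence in $\mathcal{D}'$, then $\kappa'\uparrow\kappa$) is sound, and the mollified field is indeed divergence-free where it is defined. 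The paper instead avoids regularization altogether: it applies the change of variables $x\mapsto x-h$ inside the integral, so that the only $h$-dependence sits on the smooth cutoff $T_{-h}\psi$ and on $\delta_{-h}u^k$, whose summed $\partial_{h_k}$-derivative is $-T_{-h}\Div u=0$ distributionally; the $h$-derivative then lands directly on $\psi$, producing the $\nabla\psi$ term with no differentiation of $u$ and no integration by parts in $x$, so the identity holds for $u\in L^3$ in one stroke. The trade-off is clear: the paper's translation trick is shorter and works at the natural regularity, while your route is more elementary term by term but shifts the burden to the (correctly executed) approximation step.
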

\begin{proof} Expanding the left hand side of \eqref{2} we get:
\begin{align}\sum_k\partial_{h_k}\int \psi\left[(u\otimes T_h u)u^k + (T_h u \otimes u) u^k -(u\otimes T_h u)T_h u^k - (T_h u\otimes u)T_h u^k\right]dx\notag\\
+\sum_k\partial_{h_k}\int\psi\left[(T_hu\otimes T_h u)\delta_h u^k + (u\otimes u)T_hu^k- (u\otimes u)u^k\right]dx.\notag
\end{align}
One easily sees that:
$$\sum_k\partial_{h_k}\int\psi(u\otimes u)u^k dx = \sum_k\partial_{h_k}\int\psi(u\otimes u)T_h u^k dx = 0,$$ where the second equality comes from the incompressibility of $u.$
Next, notice that for $|h|<\kappa$ we can apply the operator $T_{-h}$ inside the integrand and then compute directly:
\begin{align}\sum_k\partial_{h_k}\int\psi(T_h u \otimes T_h u)\delta_h u^k dx &= -\sum_k\partial_{h_k} \int T_{-h}\psi (u\otimes u)\delta_{-h}u^k dx = \notag\\ \int(u\otimes u)\delta_{-h}u\cdot T_{-h}\nabla\psi dx &= - \int (T_hu\otimes T_hu)\delta_h u\cdot\nabla \psi dx.\notag \end{align} 
Combining these calculations we arrive at the result.\end{proof}
In what follows, we will need the double inner product of two tensors $\eta_1^{ij},\eta_2^{ij},$ which we shall denote by $\eta_1:\eta_2 \equiv \sum_{i,j} \eta_1^{ij}\eta_2^{ij}.$ 
Our local version of the KHM relation is the following:
\begin{lemma}[Local KHM relation]Let $u^\nu$ be a stationary martingale solution to \eqref{eqn:NSE} on the time interval $[0,T]$ as postulated in Assumptions \hyperref[a1]{1} and \hyperref[a2]{2}. Let $\eta(h) = (\eta^{ij}(h))_{i,j=1}^3$ be a smooth order $2$ test function of the form:
$$\eta(h) = \phi(|h|)I + \varphi(|h|)\hoh,$$
where $\phi(\ell),\varphi(\ell)$ are smooth functions compactly supported on $(0,\kappa)$ and $\hat{h} = h/|h|.$ In component notation: $$\eta^{ij}(h) = \phi(|h|)\delta^{ij} + \varphi(|h|)\frac{h_ih_j}{|h|^2}.$$ We then have the following equality:
\begin{gather}\frac{1}{2}\int\partial_{h_k}\eta:D(t,h)^kdh +\frac{1}{2}\mathbf{E}\int\int\eta :T_h(u\otimes u)(\delta_hu\cdot\nabla\psi) dx dh= \label{KS1}\\
2\nu \int \Delta \eta:\Gamma(h) dh -\nu\mathbf{E}\int\int\left(2(\partial_{x_k}\psi\partial_{h_k}\eta):(u\otimes T_h u) \label{KS3}+\Delta\psi \eta:(u\otimes T_hu)\right)dxdh \\
+\mathbf{E}\int\int p\eta:(\nabla\psi\otimes T_h u)dx dh -\mathbf{E}\int\int \psi (\nabla(T_h p)\otimes u):\eta dx dh\label{KS4}\\+ 2\mathbf{E}\int\int\eta:(u \otimes T_h Z_t + Z_t \otimes T_h u)dx dh dt
 + \mathbf{E}\int\int u\cdot\nabla\psi\left(\eta:(u\otimes T_hu)\right)dxdh.\label{KSlast}
\end{gather}
\label{KHM}
\end{lemma}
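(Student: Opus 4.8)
The plan is to derive the identity by testing the weak formulation of \eqref{eqn:NSE} against a carefully chosen divergence-free-in-$x$ test vector field depending on the parameter $h$, then integrating the resulting two-point relation against the tensor test function $\eta(h)$ in the $h$ variable. Concretely, for fixed $h$ with $|h|<\kappa$ and for each index $j$, I would test the equation for $u(x)$ against a vector field built from $\psi(x)$ and $T_h u(x) = u(x+h)$; since $u$ is a martingale solution and $T_hu$ has the right regularity, the nonlinear, viscous, pressure and noise terms each produce a recognizable contribution. Symmetrizing in $h\mapsto -h$ (equivalently, also testing the equation for $u(x+h)$ against a field built from $\psi$ and $u(x)$) yields the symmetric two-point tensor quantity whose $h$-divergence is controlled, as already prepared by \hyperref[23]{Lemma 2.3}. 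The key bookkeeping device is \hyperref[23]{Lemma 2.3}, which converts $\sum_k\partial_{h_k}\int\psi(\delta_hu\otimes\delta_hu)\delta_hu^k\,dx$ into correlation-type fluxes $\int\psi[(u\otimes T_hu)u^k + \cdots]dx$ plus the cutoff error term $-\int T_h(u\otimes u)\delta_hu\cdot\nabla\psi\,dx$; this is what turns "structure function fluxes" into "correlation fluxes" that the equation can actually see.

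The key steps, in order, are as follows. First, record the two-point evolution: starting from the weak form of \eqref{eqn:NSE}, derive the equation governing $\mathbf{E}\int\psi\,(u\otimes T_hu)^{ij}dx = \Gamma^{ij}(h)$ in the stationary regime, where stationarity kills the time-derivative term $\partial_t\Gamma$. This produces, schematically, $0 = (\text{transport/flux terms}) + \nu(\text{viscous terms}) + (\text{pressure terms}) + (\text{noise terms}) + (\text{cutoff commutator terms from }\nabla\psi)$. Second, handle each family of terms: the nonlinear terms $\int\psi\,(u\cdot\nabla u)\otimes T_hu$ and its transpose/mirror are rewritten using $u\cdot\nabla u = \Div(u\otimes u)$ and integration by parts in $x$, moving derivatives onto $\psi$ and converting $x$-derivatives acting on $T_hu$ into $h$-derivatives via $\partial_{x_k}T_hu = \partial_{h_k}T_hu = T_h\partial_{x_k}u$; the terms with no derivative hitting $\psi$ reassemble, through \hyperref[23]{Lemma 2.3}, into $\tfrac12\sum_k\partial_{h_k}\int\psi D(t,h)^k\,dh$ plus the $\int\int\eta:T_h(u\otimes u)(\delta_hu\cdot\nabla\psi)$ term on the left-hand side of \eqref{KS1}, while the derivatives-on-$\psi$ terms produce the last line \eqref{KSlast} and the $p$-free part of \eqref{KS4}. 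Third, the viscous terms: $\nu\Delta_x$ acting on $u$ or $T_hu$ becomes $\nu\Delta_h$ when paired against $T_hu$ (hence $2\nu\int\Delta\eta:\Gamma\,dh$ after integrating by parts in $h$, using that $\eta$ is compactly supported in $(0,\kappa)$), with the remaining pieces — where a derivative lands on $\psi$ — giving the $\partial_{x_k}\psi\,\partial_{h_k}\eta$ and $\Delta\psi$ terms in \eqref{KS3}. Fourth, the pressure terms: $\nabla p$ tested against the $\psi\,T_hu$-field gives, after integrating by parts in $x$, a term with $\nabla\psi\otimes T_hu$ (since $\Div(T_hu)=0$ eliminates the $\psi\Div T_hu$ piece), i.e. the first term of \eqref{KS4}; the mirror term with $\nabla(T_hp)\otimes u$ and the $\psi$ factor is the second term of \eqref{KS4}, and here \hyperref[a2]{Assumption 2} guarantees these pressure integrals make sense. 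Fifth, the noise term $Z_t$ simply pairs up to give $2\mathbf{E}\int\int\eta:(u\otimes T_hZ_t + Z_t\otimes T_hu)\,dx\,dh\,dt$. Finally, integrate the whole identity against $\eta(h)$ over $h\in\mathbb{R}^3$; because $\eta$ is smooth and compactly supported in $\{0<|h|<\kappa\}$, all integrations by parts in $h$ are justified with no boundary terms, and the structural form $\eta = \phi(|h|)I + \varphi(|h|)\hoh$ is only needed later (it plays no role in deriving the identity itself, only in extracting the scalar $4/3$ and $4/5$ laws), so the lemma holds for this class of $\eta$ a fortiori.

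The main obstacle is \emph{justifying the formal manipulations at the low regularity of martingale solutions} — in particular, that testing the weak formulation against an $h$-dependent field built from $T_hu$ is legitimate, and that all the spatial integrations by parts (moving derivatives between $u$, $T_hu$, $\psi$ and $p$) are valid $\mathbf{P}$-a.s. and in $L^1(\Omega)$. Here \hyperref[a1]{Assumption 1} and \hyperref[a2]{Assumption 2} (uniform $L^3_{t,x}$ control of $u$ and $L^{3/2}$ control of $p$, together with vanishing of increments) are exactly what make every product $u\otimes T_hu$, $T_h(u\otimes u)\,\delta_hu$, $p\,T_hu$ integrable and the $h$-dependence continuous, so that the distributional identity in $h$ is meaningful; the viscous terms carry an explicit $\nu$ and are controlled for fixed $\nu$ using \eqref{stb12}. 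A secondary technical point is keeping careful track of which increments are $\delta_h$ versus $\delta_{-h}$ and ensuring the symmetrization is done consistently — this is precisely the content already isolated in \hyperref[23]{Lemma 2.3}, so with that lemma in hand the remaining work is bookkeeping rather than analysis.
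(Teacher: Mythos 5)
Your overall skeleton — derive a two-point balance for $\mathbf{E}\int\psi\,(u\otimes T_hu)\,dx$, kill the time derivative by stationarity, convert correlation fluxes into structure-function fluxes via \hyperref[23]{Lemma 2.3}, and contract with $\eta$ in $h$ — is the same as the paper's. But the step you label as "the main obstacle" and then wave away is precisely the actual content of the proof, and your dismissal of it does not work. You propose to test the weak formulation against a field built from $\psi\,T_hu$; this is not legitimate as stated, for two reasons. First, the weak formulation in \hyperref[D1]{Definition 1.2} only admits smooth, compactly supported, divergence-free test fields, whereas $\psi\,T_hu$ is neither smooth nor divergence-free; in particular the pressure has been projected out of that formulation, so the terms in \eqref{KS4} cannot even appear until one reinstates the equation with the chosen pressure fields of \hyperref[a2]{Assumption 2}. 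Second, even granting the pressure form of the equation, the product rule $\partial_t(\psi\,u\otimes T_hu)=\psi\,\partial_tu\otimes T_hu+\psi\,u\otimes\partial_tT_hu$ and the subsequent integrations by parts are exactly the kind of manipulation that is \emph{not} known to be valid at Leray--Hopf/martingale regularity (this is the same obstruction as testing the equation by $u$ itself to get energy equality). Integrability of the limiting products $u\otimes T_hu$, $T_h(u\otimes u)\delta_hu$, $p\,T_hu$ under Assumptions \hyperref[a1]{1}--\hyperref[a2]{2} makes the final identity \emph{meaningful}, but it does not \emph{derive} it.

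The paper closes this gap by a regularization argument that your proposal omits: apply the smoothing operators $K_\delta$ of \cite{EG} (chosen because plain convolution does not interact well with a bounded domain) to \eqref{eqn:NSE}, derive the two-point balance exactly for the smooth fields $u_\delta=K_\delta u$, $p_\delta=K_\delta p$ — at which point all product rules and integrations by parts are classical — keep track of the commutators $[K_\delta,\Div]$, $[K_\delta,\nabla]$, $[K_\delta,\Delta]$, and then pass to the limit $\delta\to0$ term by term using the $L^3_{\omega,t,x}$ bound on $u$ and the $L^{3/2}_{\omega,t,x}$ bound on $p$ (this is where the $L^3$/$L^{3/2}$ duality is genuinely used, not merely to make the statement well defined), with the commutator contribution vanishing by the estimates in \cite{EG}. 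If you want your argument to stand on its own, you must either reproduce such a regularize-and-commute scheme or give some other rigorous justification of the two-point balance for the unsmoothed solution; as written, the proposal asserts the conclusion of that step rather than proving it.
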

Here and in what follows, $I$ denotes the identity matrix.
\begin{proof}Note that for any stationary process $X_t,$ one has $$\frac{1}{T}\mathbf{E}\int_0^T \|X_t\|_{L^p}^pdt = \mathbf{E}\|X_t\|_{L^p}^p.$$ Therefore Assumptions \hyperref[a1]{1} and \hyperref[a2]{2} give uniform bounds for $L_{\omega,t,x}^3$ norms of $u$ and $L_{\omega,t,x}^{3/2}$ norms of $p$ and their respective increments over fixed finite intervals. We will first prove an intermediate balance relation with an additional integration over a finite time interval and subsequently use \hyperref[23]{Lemma 2.3} and the stationarity of $u.$ 

In order to prove the KHM relation, our strategy is to smoothen the Navier-Stokes equations to get a similar relation and then pass to the limit. Given that we are considering functions on a bounded domain, convolving with a standard mollifier might be problematic since differential operators and convolution do not simply commute. We thus opt for a slightly different approach which is however still rather direct.

We refer to \cite{EG} for the complete definition of the smoothing operators $K_\delta.$ We merely mention that for appropriate smooth maps $\phi_\delta$ and (sufficiently small) fixed $r>0,$ these operators are given by:
$$K_\delta f(x) = \int_{B(0,1)}\rho(y)f(\phi_\delta(x)+r\delta y)dy,$$
$\rho$ being a $C^\infty$ positive and radially symmetric function with integral $1$ and support in $B(0,1).$ If $f$ is in any $L^p$ space for some $p\in[1,\infty),$ $K_\delta f$ is $C^\infty$ and thus we can apply differential operators to it. The main benefit of the operators $K_\delta$ is that one is able to control their commutators with differential operators in the limit $\delta \rightarrow 0$.

By applying $K_\delta$ to $\eqref{eqn:NSE},$ we obtain:
\begin{equation}
    \partial_t K_\delta u + K_\delta \Div(u\otimes u) + K_\delta \nabla p = \nu K_\delta \Delta u + K_\delta Z_t \iff \end{equation}
    \begin{equation}\partial_t K_\delta u + \Div K_\delta (u\otimes u) + \nabla K_\delta p - \nu \Delta K_\delta u - K_\delta Z_t = -[K_\delta, \Div](u\otimes u) - [K_\delta,\nabla ]p - \nu[K_\delta, \Delta] u.
\end{equation}
The product rule gives:
$$\partial_t(\psi K_\delta u\otimes T_hK_\delta u) = \psi \partial_t K_\delta u \otimes T_h K_\delta u + \psi K_\delta u \otimes \partial_t T_h K_\delta u,$$ therefore:

\begin{gather}
\partial_t\int\psi K_\delta u \otimes T_h K_\delta u dx = \int \psi[K_\delta \partial_tu \otimes T_h K_\delta u + K_\delta u \otimes T_h K_\delta \partial_t u]dx\\
= -\int \psi[\partial_k K_\delta(u^ku)\otimes T_h K_\delta u + K_\delta u \otimes \partial_k T_h K_\delta(u^ku)]dx\\
-\int_{D}\psi[\nabla K_\delta p \otimes T_h K_\delta u + K_\delta u \otimes \nabla T_h K_\delta p]dx
+\nu\int_{D}\psi[\Delta K_\delta u \otimes T_h K_\delta u + K_\delta u \otimes \Delta T_h K_\delta u]dx\\
+\int\psi[K_\delta Z_t \otimes T_h K_\delta u + K_\delta u \otimes T_h K_\delta Z_t]dx +I_C,
\end{gather}
where $I_C$ is the contribution of the commutator terms:
\begin{align}
I_C := -&\int\psi\left([K_\delta, \Div](u\otimes u) + [K_\delta,\nabla ]p + \nu[K_\delta, \Delta] u\right)\otimes T_h K_\delta u dx\\
-&\int\psi K_\delta u \otimes \left([K_\delta, \Div](u\otimes u) + [K_\delta,\nabla ]p + \nu[K_\delta, \Delta] T_h u\right)dx,
\end{align}
and should not be confused with the identity matrix $I.$
The contribution of the nonlinearity becomes:
\begin{equation}\int(K_\delta u \cdot \nabla\psi) K_\delta u\otimes T_h K_\delta u dx 
+ \partial_{h_k}\int\psi K_\delta u \otimes (u^k T_h K_\delta u - T_h K_\delta(u^k u))dx.
\end{equation}
The contribution of the pressure terms:
\begin{equation}\int K_\delta p \nabla\psi \otimes T_h K_\delta u dx + \nabla_h \int \psi K_\delta p T_h K_\delta u dx - \nabla_h^\perp\int \psi K_\delta u T_h K_\delta p dx. 
\end{equation}
The contribution of the viscous terms:
\begin{equation}\int\left(\Delta\psi K_\delta u \otimes T_h K_\delta u dx + 2\psi K_\delta u \otimes (\nabla\psi \cdot \nabla T_h K_\delta u) + 2\psi K_\delta u \otimes T_h \Delta K_\delta u \right)dx.
\end{equation}

The integrations by parts performed above give no boundary terms because $\psi$ is compactly supported inside $D.$
For brevity's sake, from now on we use the abbreviations $u_\delta = K_\delta u, p_\delta = K_\delta p$ and $ \Gamma_\delta(t,h) = \int\psi u_\delta\otimes T_h u_\delta dx.$ 
Upon contracting by the tensor $\eta$ and integrating (by parts) in $h$, in $t$ and in $\omega,$ as well as using the above calculations, we obtain:
\noeqref{KHMdelta2,KHMdelta3,KHMdelta4,KHMdelta6,KHMdelta7}
\begin{align}
&\int \eta:\Gamma_\delta(T,h)dh - \int \eta:\Gamma_\delta(0,h) dh =  \label{KHMdelta1}\\
&- \mathbf{E}\int_0^T \int\int\psi\left(\partial_{h_k}\eta:(u_\delta\otimes T_h u_\delta)\right)(u_\delta^k-T_hu_\delta^k)dx dh dt\label{KHMdelta2}\\
&+\nu \mathbf{E}\int_0^T\int\int \eta:(u_\delta\otimes T_h u_\delta)\Delta\psi dx dh dt-2\nu\mathbf{E}\int_0^T\int\int(\partial_{x_k}\psi\partial_{h_k}\eta):(u_\delta\otimes T_h u_\delta)dx dh dt\label{KHMdelta3}\\
&+2\nu\mathbf{E}\int_0^T\int\int\psi\Delta\eta:(u_\delta\otimes T_h u_\delta)dx dh dt\label{KHMdelta4}\\
&+ \mathbf{E}\int_0^T\int\int p_\delta\eta:(\nabla\psi\otimes T_h u_\delta)dx dh dt - \mathbf{E}\int_0^T\int\int \psi\Div\eta\cdot(T_h p_\delta u_\delta- p_\delta T_h u_\delta) dx dh dt\label{KHMdeltap}\\
&+\mathbf{E}\int_0^T\int\int (u_\delta\cdot\nabla\psi)\left(\eta:(u_\delta\otimes T_h u_\delta)\right)dx dh dt\label{KHMdelta6}\\
&+\mathbf{E}\int_0^T \int\int \psi(x)\eta:(K_\delta Z_t\otimes T_h u_\delta + u_\delta \otimes T_h K_\delta Z_t)dx dh dt\label{KHMdelta7}\\
&+\mathbf{E}\int_0^T\int I_C:\eta dh dt.\label{KHMdeltalast}
\end{align}

We now want to take the limit $\delta \rightarrow 0$ and produce a relation for the solution $u.$ 

First of all, note that $\|u\|_{L_{\omega,x}^3}$ is finite. Since $\|u_\delta - u\|_{L_x^3} \leq C \|u\|_{L_x^3}$ and $\|u_\delta - u\|_{L_x^3}\rightarrow 0$ as $\delta \rightarrow 0,$ by the dominated convergence theorem it follows that $\|u_\delta - u\|_{L_{\omega,x}^3}\rightarrow 0 $ as $\delta \rightarrow 0.$ For convenience and clarity, we henceforth will freely use component notation and employ the Einstein summation convention. Thus, observe that for the terms on the left hand side of \eqref{KHMdelta1} we have for fixed $t$:
\begin{align}\left|\int \eta(h):\left(\Gamma_\delta(t,h)-\Gamma(t,h)\right)dh\right| &= \left| \mathbf{E}\int\int \eta^{ij}\psi\left(u_\delta^iT_h u_\delta^j - u^iT_hu^j\right)dxdh\right|\notag\\
&\leq C\|\eta\|_{L^\infty}\left(\mathbf{E}\|\psi (u_\delta^i - u^i) T_h u_\delta^j\|_{L^1} + \mathbf{E}\|T_{-h}(\psi u^i)(u_\delta^j - u^k)\|_{L^1}\right)\notag\\
&\leq C\|\eta\|_{L^\infty}\|\psi\|_{L_x^3}\|u_\delta - u\|_{L_{\omega,x}^3}\|u\|_{L_{\omega,x}^3} \rightarrow 0 \text{ as } \delta \rightarrow 0.\label{KHMlimit1}
\end{align}

Similarly, the following also hold:\noeqref{S1,S2,S3,S4,S5,S6,S7,S8,S9,S10}
\begin{equation}\mathbf{E}\int_0^T\int\int\Delta\psi\eta(h):\left(u_\delta\otimes T_hu_\delta - u\otimes T_h u\right)dxdhdt \rightarrow 0 \text{ as } \delta \rightarrow 0,\label{S1}
\end{equation}
\begin{equation}\mathbf{E}\int_0^T\int\int(\partial_{x_k}\psi\partial_{h_k}\eta(h)):\left(u_\delta\otimes T_h u_\delta - u\otimes T_h u\right)dx dh dt \rightarrow 0 \text{ as } \delta\rightarrow 0\label{S2}
\end{equation}
\begin{equation}\mathbf{E}\int_0^T\int\int\psi\Delta\eta(h):\left(u_\delta\otimes T_hu_\delta - u\otimes T_hu\right)dxdhdt\rightarrow 0 \text{ as } \delta \rightarrow 0,\label{S3}
\end{equation}
For the term in \eqref{KHMdelta2}, we only need to modify the previous argument in order to account for the fact that there are three $u$ factors in the products and therefore we have:
\begin{align}\left|\mathbf{E}\int_0^T\int\int \partial_k\psi\eta^{ij}(h)\left(u_\delta^ku_\delta^iT_hu_\delta^j-u^ku^iT_hu^j\right) dx dh \di t\right| &\leq C\|\psi\|_{L^\infty}\|\eta\|_{L^\infty}\|u\|_{L_{\omega,t,x}^3}^2\|u_\delta - u\|_{L_{\omega,t,x}^3}\\&\rightarrow 0 \text{ as } \delta\rightarrow 0.\label{S4}
\end{align}
The same argument suffices to show:
\begin{equation}\mathbf{E}\int_0^T\int\psi\partial_{h_k}\eta^{ij}(h)\left(u_\delta^iT_hu_\delta^j(u_\delta^k-T_hu_\delta^k)-u^iT_hu^j(u^k-T_hu^k)\right)dxdhdt \rightarrow 0, \text{ as } \delta \rightarrow 0.\label{S5}
\end{equation}
We now discuss the terms involving the pressure. For the first term in line \eqref{KHMdeltap}, we have:
\begin{align}
    \left|\mathbf{E}\int_0^T\int \int \eta^{ij}\partial_i\psi (p_\delta T_h u_\delta^j - pT_hu^j)dx dh dt\right| &\leq C\|\eta\|_{L^\infty}\|\psi\|_{L^\infty}\|p_\delta - p\|_{L_{\omega,t,x}^{3/2}}\|u\|_{L_{\omega,t,x}^3} \notag\\&+C\|\eta\|_{L^\infty}\|\psi\|_{L^\infty}\|p\|_{L_{\omega,t,x}^{3/2}}\|u_\delta - u\|_{L_{\omega,t,x}^3}\notag\\
    &\rightarrow 0 \text{ as } \delta \rightarrow 0,\label{S6}
\end{align}
following the same lines of reasoning as before but now exploiting \hyperref[a2]{Assumption 2} as well.
We similarly obtain:
\begin{align}
    \left|\mathbf{E}\int_0^T\int\int \psi\Div\eta\cdot (p_\delta T_h u_\delta  - p T_h u) dx dh dt\right| &\leq C\|\psi\|_{L^\infty}\|\Div\eta\|_{L^\infty}\|p_\delta - p\|_{L_{\omega,t,x}^{3/2}}\|u\|_{L_{\omega,t,x}^3} \notag\\
    &+C\|\psi\|_{L^\infty}\|\Div\eta\|_{L^\infty}\|p\|_{L_{\omega,t,x}^{3/2}}\|u_\delta-u\|_{L_{\omega,t,x}^3}\notag\\
    &\rightarrow 0 \text{ as } \delta \rightarrow 0\label{S7}
\end{align}
as above. However, the limit can be rewritten as:
\begin{align}
\mathbf{E}\int_0^T\int\int\psi\Div\eta \cdot T_h u p dx dh dt &= \mathbf{E}\int_0^T\int\int_{\ell=0}^\infty\int_{|h| = \ell} \Phi(\ell)\psi p (T_h u\cdot \hat{h})dS(h)d\ell dx dt\notag\\
&= \mathbf{E}\int_0^T\int\int_{\ell=0}^\infty \int_{|h|\leq \ell}\Phi(\ell) \psi p \Div_h T_h u dh d\ell dx dt = 0, \label{S8}
\end{align} 
where $\Phi(\ell) = \phi'(\ell) + \varphi'(\ell) + 2\ell^{-1}\varphi'(\ell)$ comes from the calculation $\Div\eta(h) = \Phi(|h|)\hat{h}$ and we have used the divergence theorem and incompressibility of $u$. Thus this term does not give any contribution in the limit.

The last pressure term is treated in the same way but does not seem to vanish in the limit, giving instead a term that will later be seen to give rise to $\frac{1}{2}$ times the pressure term in the local energy inequality:
\begin{align}
    \mathbf{E}\int_0^T\int\int \psi \Div\eta(T_h p_\delta u_\delta) dx dh dt \rightarrow \mathbf{E}\int_0^T\int\int \psi \Div\eta\cdot(T_h p u)dx dh dt \text{ as } \delta \rightarrow 0.\label{S9}
\end{align}
Notice that the limit can be integrated by parts in $h$ (which can be formally justified by approximation) to obtain: 
\begin{equation}\mathbf{E}\int_0^T\int\int \psi \Div \eta \cdot(T_h p u) dx dh dt =
\mathbf{E}\int_0^T\int\int\psi \eta : (u\otimes T_h \nabla p) dx dh dt \label{S10}
\end{equation}
As for the energy input term, we have:
\begin{align}\left|\mathbf{E}\int_0^T\int\int\psi\eta:(u_\delta\otimes T_hK_\delta Z_t + K_\delta Z_t \otimes T_h u_\delta - u\otimes T_h Z_t - Z_t\otimes T_h u) dx dh dt\right| \leq \notag\\
C\|\psi\|_{L^\infty}\|\eta\|_{L^\infty}\left(\|u_\delta - u\|_{L_{\omega,t,x}^2}\|Z_t\|_{L_{\omega,t,x}^2} + \|K_\delta Z_t - Z_t\|_{L_{\omega,t,x}^2}\|u\|_{L_{\omega,t,x}^2}\right) \rightarrow 0 \text{ as } \delta \rightarrow 0.\label{KHMlimitlast}\end{align}
The last thing to deal with is the commutator terms, however those will all be of lower order and tend to zero in the limit $\delta\rightarrow 0,$ as is shown in \cite{EG}.
This fact, the equality in the lines \eqref{KHMdelta1} through \eqref{KHMdeltalast} and the information on the limits collected between \eqref{KHMlimit1} and \eqref{KHMlimitlast} give the following balance relation:
\begin{align}&\int \eta:\Gamma(T,h)dh - \int \eta:\Gamma(0,h) dh = \mathbf{E}\int_0^T\int\int (u\otimes T_h u):\left((u\cdot\nabla\psi)\eta+\psi\partial_{h_k}\eta\delta_{h}u^k\right)dx dh dt \notag\\
&+\nu\mathbf{E}\int_0^T\int\int (u\otimes T_h u):\left[\Delta\psi\eta -2\partial_{x_k}\psi\partial_{h_k}\eta + 2 \psi\Delta\eta\right]dx dh dt\notag\\
&+ \mathbf{E}\int_0^T\int\int p\eta:(\nabla\psi\otimes T_h u)dx dh dt
- \mathbf{E}\int_0^T\int\int \psi (u\otimes\nabla(T_hp)): \eta dx dh dt\notag\\
&+\mathbf{E}\int_0^T\int\int\psi\eta:\left(u\otimes T_hZ_t+ Z_t\otimes T_h u\right)dx dh dt.\notag
\end{align}

The left hand side above vanishes, as $\Gamma$ is independent of time by \hyperref[L2.2]{Lemma 2.2}, using the stationarity of $u$. Using \hyperref[23]{Lemma 2.3} we get: $$\mathbf{E}\int\int \psi (u\otimes T_h u):\partial_{h_k}\eta\delta_hu^kdx dh = -\frac{1}{2}\int\partial_{h_k}\eta:D(h)^kdh -\frac{1}{2}\mathbf{E}\int\int\eta:T_h(u\otimes u)(\delta_h u\cdot\nabla\psi)dxdh.$$
Plugging this in the relation above and using the stationarity of $u$ to drop the integration in time completes the proof.
\end{proof}
\section{Proof of the Local 4/3 Law}\label{sec:p43}
In this section we prove \eqref{L431} and \eqref{L432} under the assumptions of their respective theorems. \hyperref[KHM]{Lemma 2.4} comes into play here, as well as in the next section. Our use of it consists in plugging in appropriate forms of isotropic tensors $\eta^{ij}$ as suggested by the statement of the local KHM relation, then switching to polar coordinates and deriving a differential equation for the structure function in the sense of distributions. After that point we can start using our assumptions to deal with the nonlinear flux terms and obtain the scaling laws we desire.

We use the following shorthand notations:
\begin{equation}
\bar{\Gamma}(\ell) = \frac{1}{4\pi}\int_{S^2}I:\Gamma(\ell \hat{n})dS(\hat{n}),
\end{equation}
\begin{equation}
\bar{Z}(\ell) = \frac{1}{4\pi}\mathbf{E}\int_{S^2}\int(u\cdot T_{\ell\hat{n}}Z_t+ T_{\ell\hat{n}}u \cdot Z_t)dS(\hat{n}),
\end{equation}
where $I$ is again the $3$ by $3$ identity matrix.
\begin{proof}[Proof of \eqref{L431}]
We start by picking our test function to be of the form $\eta^{ij}(h) = \delta^{ij}\phi(|h|).$ This in particular implies that:
$$\partial_{h_k}\eta^{ij}(h) = \delta^{ij}\frac{h_k}{|h|}\phi'(|h|).$$
Using the fact that $\bar{\Gamma}$ is spherically symmetric and $C^2$ (\hyperref[L2.2]{Lemma 2.2}) one obtains:
$$\int \Delta\eta :\Gamma(h)dh = \int \eta:\Delta\Gamma(h)dh = 4\pi \int_{\ell=0}^\infty\phi(\ell) (\ell^2\bar{\Gamma}''(\ell)+2\ell\bar{\Gamma}'(\ell)) d\ell,$$
whereby using \hyperref[KHM]{Lemma 2.4} we obtain:

\begin{align}
    &\frac{1}{2}\mathbf{E}\int_0^\infty\int_{S^2}\int\phi'(\ell)\ell^2|\delta_{\h}u|^2\delta_{\h}u\cdot\hat{n}dxdS(\hat{n})d\ell +
    \frac{1}{2}\mathbf{E}\int_0^\infty\int_{S^2}\int\phi(\ell)\ell^2|\Th u|^2\delta_{\h}u\cdot\nabla\psi dx dS(\hat{n})d\ell =\notag\\
    &2\nu\cdot 4\pi\int_0^\infty\phi(\ell)\ell^2(\bar{\Gamma}''(\ell)+\frac{2}{\ell}\bar{\Gamma}'(\ell))d\ell + 
    \nu\mathbf{E}\int_0^\infty\int_{S^2}\int\ell^2\phi(\ell)\Delta\psi (u\cdot \Th u)dx dS(\hat{n})d\ell \notag\\&- 
    2\nu\mathbf{E}\int_0^\infty\int_{S^2}\int\phi'(\ell)\ell^2(u\cdot \Th u)(\nabla\psi\cdot\hat{n})dxdS(\hat{n})d\ell\notag\\
    &+\mathbf{E}\int_0^\infty\int_{S^2}\int\phi(\ell)\ell^2p\Th u\cdot\nabla\psi dx dS(\hat{n})d\ell-
    \mathbf{E}\int_0^\infty\int_{S^2}\int\phi(\ell)\ell^2\psi T_{\ell\hat{n}}(\nabla p) \cdot u dxdS(\hat{n})d\ell\notag\\
    &+\mathbf{E}\int_0^\infty\int_{S^2}\int\phi(\ell)\ell^2(u\cdot\nabla\psi)(u\cdot \Th u)dxdS(\hat{n})d\ell
    +4\pi\int_0^\infty\phi(\ell)\ell^2\bar{Z}(\ell)d\ell.\notag
\end{align}

Set some further notation:

\begin{equation}
    \bar{F}(\ell) = \frac{1}{4\pi}\mathbf{E}\int_{S^2}\int|\Th u|^2\delta_{\h}u\cdot\nabla\psi dx dS(\hat{n}),
\end{equation}
\begin{equation}
    \bar{G}(\ell) = \frac{1}{4\pi}\mathbf{E}\int_{S^2}\int\Delta\psi(u\cdot \Th u)dx dS(\hat{n}),
\end{equation}
\begin{equation}
    \bar{Q}(\ell) = \frac{1}{4\pi}\mathbf{E}\int_{S^2}\int(u\cdot \Th u)(\nabla\psi\cdot\hat{n})dxdS(\hat{n}),
\end{equation}
\begin{equation}
	\bar{H}(\ell) = \frac{1}{4\pi}\mathbf{E}\int_{S^2}\int(u\cdot\nabla\psi)(u\cdot \Th u)dxdS(\hat{n})
\end{equation}
\begin{equation}
    \bar{P}_1(\ell) = \frac{1}{4\pi}\mathbf{E}\int_{S^2}\int p\Th u\cdot\nabla\psi dxdS(\hat{n}),
\end{equation}
\begin{equation}
    \bar{P}_2(\ell) = \frac{1}{4\pi}\mathbf{E}\int_{S^2}\int\psi u\cdot \nabla (T_{\ell\hat{n}}p)dx dS(\hat{n}).
\end{equation}
A fact that we will use is that $\bar{P}_2(0) = -\bar{P}_1(0),$ which can be seen by an integration by parts. 
We thus have the following equation in the sense of distributions:
\begin{align}-(S_0(\ell)\ell^2)' &= 4\nu (\ell^2\bar{\Gamma}'(\ell))' + 2\ell^2\bar{Z}(\ell)\\
&+2\nu\ell^2\bar{G}(\ell) + 4\nu(\ell^2\bar{Q}(\ell))'\\
&+2\ell^2\bar{P}_1(\ell) - 2 \ell^2\bar{P}_2(\ell)\\
&+2\ell^2\bar{H}(\ell) - \ell^2\bar{F}(\ell).
\end{align}
By \hyperref[L2.1]{Lemma 2.1} (resp. \hyperref[L2.2]{Lemma 2.2}) we obtain that $S_0(\ell)\ell^2\rightarrow 0$ (resp. $\bar{\Gamma}(\ell)\ell^2 \rightarrow 0$) as $\ell \rightarrow 0.$ Thus, after integrating with respect to $\ell$ and dividing by $\ell^3$, we have:
\begin{align}
-\frac{S_0(\ell)}{\ell}&=  4\frac{\nu\bar{\Gamma}'(\ell)}{\ell} + \frac{2}{\ell^3}\int_0^\ell\tau^2 \bar{Z}(\tau)d\tau \\
&+\frac{2\nu}{\ell^3}\int_0^\ell\tau^2 \bar{G}(\tau)d\tau + \frac{4\nu}{\ell} \bar{Q}(\ell)\\
&+\frac{2}{\ell^3}\int_0^\ell \tau^2 \bar{P}_1(\tau)d\tau - \frac{2}{\ell^3}\int_0^\ell\tau^2\bar{P}_2(\tau)d\tau\\
&+\frac{2}{\ell^3}\int_0^\ell \tau^2 \bar{H}(\tau)d\tau - \frac{1}{\ell^3}\int_0^\ell \tau^2\bar{F}(\tau) d\tau.
\end{align}
Now, for each fixed $\nu$ we have: 
\begin{equation}\frac{2}{\ell^3}\int_0^\ell\tau^2(\bar{P}_1(\tau)-\bar{P}_1(0))d\tau = \frac{2}{\ell^3}\frac{1}{4\pi}\mathbf{E}\int_0^\ell\int_{S^2}\int\tau^2p\delta_{\tau\hat{n}}u \cdot\nabla\psi dx dS(\hat{n})d\tau, \end{equation}
\begin{equation}\frac{2\nu}{\ell^3}\int_0^\ell \tau^2(\bar{G}(\tau)-\bar{G}(0))d\tau = \frac{2\nu}{\ell^3}\frac{1}{4\pi}\mathbf{E}\int_0^\ell\int_{S^2}\int \tau^2 \Delta\psi u\cdot\delta_{\tau\hat{n}}u dx dS(\hat{n}) d\tau, \end{equation}
\begin{equation}\frac{2}{\ell^3}\int_0^\ell\tau^2(\bar{H}(\tau)-\bar{H}(0))d\tau = \frac{2}{\ell^3}\frac{1}{4\pi}\mathbf{E}\int_0^\ell\int_{S^2}\int \tau^2(u\cdot\nabla\psi)(u\cdot\delta_{\tau\hat{n}}u)dxdS(\hat{n})d\tau,\end{equation}
\begin{equation}
\frac{2}{\ell^3}\int_0^\ell\tau^2(\bar{P}_2(\tau)-\bar{P}_2(0))d\tau = \frac{2}{\ell^3}\frac{1}{4\pi}\mathbf{E}\int_0^\ell\int_{S^2}\int\tau^2\psi u\cdot\nabla(\delta_{\tau\hat{n}}p)dxdS(\hat{n})
d\tau.\end{equation}

In what follows, we will consistently use the little-$o$ notation with the understanding that everything vanishes \textit{uniformly in} $\nu$, i.e. we write $$f(\ell) = o(g(\ell)) \text{ if and only if } \lim_{\nu\rightarrow 0}\lim_{\ell\rightarrow0}\frac{f(\ell)}{g(\ell)}=0.$$ 
For the stochastic forcing term, it is immediate by the estimates on $Z$ that:
\begin{equation}
    \frac{1}{\ell^3}\int_0^\ell\tau^2\bar{Z}(\tau)d\tau = \frac{1}{3}\bar{Z}(0) + o(1).
\label{b43}\end{equation}
We now begin to make use of \hyperref[a1]{Assumption 1}, pointing out where \eqref{WAD} is sufficient. 
By \eqref{WAD} there is $\ell_\nu \rightarrow 0$ with $\nu$ such that $\sqrt{\nu\mathbf{E}\|u\|_{L_x^2}^2} = o(\ell_\nu).$
For the $\bar{\Gamma}'(\ell)$ term, notice that:
\begin{align}
    \left|\frac{\nu\bar{\Gamma}'(\ell)}{\ell}\right| \leq \|\psi\|_{\infty} \frac{\nu}{4\pi\ell}\mathbf{E}\int_{S^2}\int |u(x)| |\nabla T_{\ell\hat{n}}u(x)|dx dS(\hat{n}) \leq C \frac{\nu}{\ell} (\mathbf{E}\|u\|_{L_x^2}^2)^{1/2}(\mathbf{E}\|\nabla u\|_{L_x^2}^2)^{1/2},\notag
\end{align}
so that:
\begin{align}\limsup_{\ell_I \rightarrow 0}\limsup_{\nu\rightarrow 0}\sup_{\ell\in(\ell_\nu,\ell_I)}\left|\frac{\nu\bar{\Gamma}(\ell)}{\ell}\right| &\leq C\limsup_{\ell_I \rightarrow 0}\limsup_{\nu\rightarrow 0}\sup_{\ell\in(\ell_\nu,\ell_I)}\frac{\nu^{1/2}(\mathbf{E}\|u\|_{L_x^2}^2)^{1/2}}{\ell_\nu}\left(\nu^{1/2}(\mathbf{E}\|\nabla u\|_{L_x^2}^2)^{1/2}\right) \notag\\
&\leq C \limsup_{\ell_I \rightarrow 0}\limsup_{\nu\rightarrow 0}\sup_{\ell\in(\ell_\nu,\ell_I)}\frac{(\nu\mathbf{E}\|u\|_{L_x^2}^2)^{1/2}}{\ell_\nu} = 0,\label{143}
\end{align}
where we have used \eqref{stb12}. Similarly, one sees that: \noeqref{143,243,343,443,543,643}
\begin{equation}
\limsup_{\ell_I\rightarrow 0}\limsup_{\nu\rightarrow 0}\sup_{\ell\in(\ell_\nu,\ell_I)}\left|\frac{\nu}{\ell}\bar{Q}(\ell)\right| \leq C\limsup_{\ell_I\rightarrow 0}\limsup_{\nu\rightarrow 0}\sup_{\ell\in(\ell_\nu,\ell_I)}\frac{\nu\mathbf{E}\|u\|_{L_x^2}^2}{\ell_\nu} = 0\label{243}
\end{equation}
As for the $\bar{G}$ term, we note:
\begin{align}\frac{\nu}{\ell^3}\int_0^\ell \tau^2\bar{G}(\tau)d\tau &= \frac{\nu}{\ell^3}\int_0^\ell \tau^2(\bar{G}(\tau)-\bar{G}(0))d\tau + \frac{\nu}{3}\bar{G}(0)\notag\\
&= \frac{\nu}{\ell^3}\frac{1}{4\pi}\mathbf{E}\int_0^\ell\int_{S^2}\int \tau^2 \Delta\psi u\cdot\delta_{\tau\hat{n}}u dx dS(\hat{n}) d\tau + \frac{\nu}{3}\bar{G}(0). \notag\end{align}
We now use the straightforward bound:
\begin{align}\left|\frac{\nu}{\ell^3}\frac{1}{4\pi}\mathbf{E}\int_0^\ell\int_{S^2}\int \tau^2 \Delta\psi u\cdot\delta_{\tau\hat{n}}u dx dS(\hat{n}) d\tau\right| &\leq C \frac{\nu}{\ell^3}\int_0^\ell\tau^2\mathbf{E
}\int_{S^2}\int |\Delta\psi||u||\delta_{\tau\hat{n}}u|dxdS(\hat{n})d\tau\notag\\
&\leq C\nu\mathbf{E}\|u\|_{L_x^2}^2,\notag
\end{align}
so we can use \eqref{WAD} to see that:
\begin{align}\limsup_{\ell_I\rightarrow0}\limsup_{\nu\rightarrow0}\sup_{\ell\in(\ell_\nu,\ell_I)}\left|\frac{\nu}{\ell^3}\int_0^\ell\tau^2(\bar{G}(\tau)-\bar{G}(0)) d\tau\right| = 0,\label{343}
\end{align}
which can also be rewritten as $\nu\ell^{-3}\int_0^\ell\tau^2\bar{G}(\tau)d\tau=\frac{\nu}{3}\bar{G}(0) + o(1).$ The reader might notice that $\nu\bar{G}(0)$ vanishes in the inviscid limit by \eqref{WAD}. The reason that we choose to keep track of it is that it will allow us, in the next proof, to use \hyperref[a3]{Assumption 3} to establish an expression for $\ell^{-1}S_0(\ell)$ in terms of the local dissipation. 

We now turn to the nonlinear terms. For the $\bar{F}$ term, we have:
\begin{align}\left|\frac{1}{\ell^3}\int_0^\ell\tau^2\bar{F}(\tau)d\tau \right| &\leq C  \frac{1}{4\pi\ell^3}\int_0^\ell\tau^2\mathbf{E}\int_{S^2}\int|\Th u|^2|\delta_{\ell\hat{n}}u||\nabla\psi| dx dS(\hat{n})d\tau \notag\\
&\leq C (\mathbf{E}\|u\|_{L_x^3}^3)^{2/3}\sup_{|h|\leq \ell}(\mathbf{E}\|\delta_hu\|_{L_x^3}^3)^{1/3},\notag
\end{align}
and therefore:
\begin{align}\limsup_{\ell_I\rightarrow 0}\limsup_{\nu\rightarrow 0}\sup_{\ell\in(\ell_\nu,\ell_I)}\left|\frac{1}{\ell^3}\int_0^\ell\tau^2\bar{F}(\tau)d\tau \right| &\leq C(\mathbf{E}\|u\|_{L_x^3}^3)^{2/3} \limsup_{\ell_I\rightarrow 0}\limsup_{\nu\rightarrow 0}\sup_{\ell\in(\ell_\nu,\ell_I)} \sup_{|h|\leq \ell}(\mathbf{E}\|\delta_hu\|_{L_x^3}^3)^{1/3}\notag\\
&\leq C(\mathbf{E}\|u\|_{L_x^3}^3)^{2/3}\limsup_{\ell\rightarrow 0}\sup_{|h|\leq \ell}(\mathbf{E}\|\delta_hu\|_{L_x^3}^3)^{1/3}=0,
\label{443}\end{align}
because of \hyperref[a1]{Assumption 1}. In other words, $\ell^{-3}\int_0^\ell\tau^2\bar{F}(\tau)d\tau = o(1).$

Notice that since
\begin{equation}\left|\frac{1}{\ell^3}\int_0^\ell\tau^2(\bar{H}(\tau)-\bar{H}(0))d\tau \right| \leq C\frac{1}{\ell^3}\int_0^\ell\tau^2\mathbf{E}\int_{S^2}\int |\nabla\psi||u|^2|\delta_{\tau\hat{n}}u|dxdS(\hat{n})d\tau,\notag\end{equation}
an argument entirely parallel to the one regarding the $\bar{F}$ term will give:
\begin{equation}\limsup_{\ell_I\rightarrow 0}\limsup_{\nu\rightarrow 0}\sup_{\ell\in(\ell_\nu,\ell_I)}\left|\frac{1}{\ell^3}\int_0^\ell\tau^2(\bar{H}(\tau)-\bar{H}(0))d\tau \right| = 0,\label{543}\end{equation}
or equivalently $\ell^{-3}\int_0^\ell\tau^2\bar{H}(\tau)d\tau = \frac{1}{3}\bar{H}(0) + o(1)$

For the $\bar{P}_1$ term, we have:
\begin{align}\left|\frac{2}{\ell^3}\frac{1}{4\pi}\mathbf{E}\int_0^\ell\int_{S^2}\int\tau^2p\delta_{\tau\hat{n}}u \cdot\nabla\psi dx dS(\hat{n})d\tau\right| &\leq C\frac{1}{\ell}\int_0^\ell \sup_{\hat{n}\in S^2} (\mathbf{E}\|\delta_{\tau\hat{n}}u\|_{L_x^3}^3)^{1/3}(\mathbf{E}\|p\|_{L_x^{3/2}}^{3/2})^{2/3}d\tau \notag\\
&\leq C (\mathbf{E}\|u\|_{L_x^{3}}^3)^{2/3}\sup_{|h|\leq \ell} (\mathbf{E}\|\delta_{h}u\|_{L_x^3}^3)^{1/3}d\tau,\notag\end{align}
where we have used \hyperref[a2]{Assumption 2}. This bound creates the same sort of situation that we had with the $\bar{F}$ term, so we can conclude that
\begin{equation}\left|
\limsup_{\ell_I\rightarrow0}\limsup_{\nu\rightarrow0}\sup_{\ell\in(\ell_\nu,\ell_I)}\frac{1}{\ell^3}\int_0^\ell\tau^2(\bar{P}_1(\tau)-\bar{P}_1(0))d\tau\right| = 0,
\label{643}\end{equation}
or equivalently $\ell^{-3}\int_0^\ell\tau^2\bar{P}_1(\tau)d\tau = \frac{1}{3}\bar{P}_1(0) + o(1).$
Finally, for the $\bar{P}_2$ term we will first integrate by parts in $x$, so that we get a situation similar to the one for $\bar{P}_1$:
\begin{equation}
\frac{1}{\ell^3}\int_0^\ell\tau^2(\bar{P}_2(\tau)-\bar{P}_2(0))d\tau = -\frac{1}{\ell^3}\frac{1}{4\pi}\mathbf{E}\int_0^\ell\int_{S^2}\int\tau^2 (u\cdot\nabla\psi)\delta_{\tau\hat{n}}pdxdS(\hat{n})
d\tau, \notag\end{equation}
which we can estimate as:

\begin{align}\left|\frac{1}{\ell^3}\int_0^\ell\tau^2(\bar{P}_2(\tau)-\bar{P}_2(0))d\tau\right| &\leq C \frac{1}{\ell}\int_0^\ell(\mathbf{E}\|u\|_{L_x^3}^3)^{1/3}\sup_{\hat{n}\in S^2}(\mathbf{E}\|\delta_{\tau\hat{n}}p\|_{L_x^{3/2}}^{3/2})^{2/3}d\tau\notag\\
&\leq C(\mathbf{E}\|u\|_{L_x^3}^3)^{1/3}\sup_{|h|\leq\ell}(\mathbf{E}\|\delta_hu\|_{L_x^3}^3)^{2/3},\notag
\end{align}
where we have again used \hyperref[a2]{Assumption 2}.
The above gives:
\begin{equation}\left|
\limsup_{\ell_I\rightarrow0}\limsup_{\nu\rightarrow0}\sup_{\ell\in(\ell_\nu,\ell_I)}\frac{1}{\ell^3}\int_0^\ell\tau^2(\bar{P}_2(\tau)-\bar{P}_2(0))d\tau\right| = 0,\label{e43}
\end{equation}
or equivalently $\ell^{-3}\int_0^\ell\tau^2\bar{P}_2(\tau)d\tau = \frac{1}{3}\bar{P}_2(0) + o(1).$

Gathering the information we have collected above in equations \eqref{b43} through \eqref{e43}, we get:
\begin{align}-\frac{S_0(\ell)}{\ell} = \frac{2}{3}\left(2\bar{P}_1(0) + \bar{H}(0) + \bar{Z}(0)+ \nu\bar{G}(0)\right) + o(1),\label{end:L43}
\end{align}
where as we have explained above the $o(1)$ vanishes as $\ell\rightarrow 0,$ uniformly in $\nu$. By the definitions of $\bar{P}_1,\bar{H},\bar{Z}$ and since $\nu\bar{G}(0)$ is also $o(1)$ by \eqref{WAD}, this completes the proof of \eqref{L431}.
\end{proof}
\begin{proof}[Proof of \eqref{L432}] We merely need to notice that the terms inside the parentheses in the right hand side of \eqref{end:L43} are precisely the ones seen in the right hand side of \eqref{LEE}.
\end{proof}

\section{Proof of the Local 4/5 Law}\label{sec:45}
Our strategy here is similar to that in the previous section. However we will also need to use the $4/3$ law itself.
\begin{proof}[Proof of \eqref{L451}] While in the proof of the local $4/3$ law our isotropic test function $\eta$ was of the form $\eta^{ij}(h) = \varphi(|h|)\delta^{ij},$ now we will take it to be of the other form suggested by the statement of the local KHM relation, namely we take:
\begin{equation}\eta^{ij}(h) = \frac{h^i h^j}{|h|^2}\varphi(|h|),\end{equation}or equivalently: \begin{equation} \eta(h) = \varphi(|h|)\hat{h}\otimes\hat{h}, \text{ where } \hat{h}=\frac{h}{|h|}.
\end{equation}
We have \begin{equation}\partial_{h_k}\left(\varphi(|h|)\hat{h}\otimes\hat{h}\right) = (\varphi'(|h|)-2|h|^{-1}\varphi(|h|))\hoh \hat{h}^k + |h|^{-1}\varphi(|h|)(e^k\otimes\hat{h}+\hat{h}\otimes e^k),\end{equation}
where $e^k$ is the unit vector in the $k$-th spatial direction\footnote{not to be confused with $e_k,$ the $k$-th eigenvector of the Stokes operator}.
In this context, \hyperref[KHM]{Lemma 2.4} gives:

\begin{gather}\int\partial_{h_k}\left(\varphi(|h|)(\hoh)\right):\left(\frac{1}{2}D^k(h)+2\nu\partial_{h_k}\Gamma(h)+2\nu\mathbf{E}\int\partial_{x_k}\psi (u\otimes T_h u) dx\right)dh \notag\\+\frac{1}{2}\mathbf{E}\int\int\varphi(|h|)(\hoh):T_h(u\otimes u)\delta_h u\cdot\nabla\psi dx dh = \mathbf{E}\int\int\varphi(|h|)(\hoh):(T_hu\otimes Z_t + u\otimes T_hZ_t)dh\notag\\
 + \mathbf{E}\int\int p\varphi(|h|)\hoh:(\nabla\psi\otimes T_hu)dx dh + \mathbf{E}\int\int\psi \varphi(|h|)(\hoh) (\nabla(T_h p)\otimes u) dx dh \notag\\
+\nu\mathbf{E}\int\int\Delta\psi\varphi(|h|) (\hoh) :(u\otimes T_h u)dx dh +\mathbf{E}\int\int u\cdot\nabla\psi \varphi(|h|)(\hoh):(u\otimes T_h u)dx dh.\notag
\end{gather}
We now wish to rewrite everything in terms of the variable $\ell$ as in the proof of the local $4/3$ law. With that in mind, we introduce the following notation:
\noeqref{45def2,45def3,45def4,45def5,45def6,45def7,45def8}
\begin{equation}\label{45def1}\tilde{\Gamma}(\ell) := \frac{1}{4\pi}\mathbf{E}\int_{S^2}\int\psi(x) (\hat{n}\cdot u) (\non:T_{\ell\hat{n}}\nabla u)dx dS(\hat{n}),\end{equation}
\begin{equation}\label{45def2}\tilde{Q}_1(\ell) := \frac{1}{4\pi}\mathbf{E}\int_{S^2}\int(\nabla\psi\cdot\hat{n})(u\cdot\hat{n})(T_{\ell\hat{n}}u\cdot\hat{n})dxdS(\hat{n}),\end{equation}
\begin{equation}\label{45def3}\tilde{Q}_2(\ell) := \frac{1}{4\pi}\mathbf{E}\int_{S^2}\int(u\cdot\nabla\psi)(T_{\ell\hat{n}}u\cdot\hat{n})dx dS(\hat{n}),\end{equation}
\begin{equation}\label{45def4}
\tilde{Q}_3(\ell) := \frac{1}{4\pi}\mathbf{E}\int_{S^2}\int(u\cdot\hat{n})(T_{\ell\hat{n}}u\cdot\nabla\psi)dxdS(\hat{n}),\end{equation}
\begin{equation}\label{45def5}\tilde{F}(\ell) := \frac{1}{4\pi}\mathbf{E}\int_{S^2}\int |T_{\ell\hat{n}}u\cdot\hat{n}|^2\delta_{\ell\hat{n}}u\cdot\nabla\psi dxdS(\hat{n}),\end{equation}
\begin{equation}\label{45def6}\tilde{Z}(\ell) := \frac{1}{4\pi}\mathbf{E}\int_{S^2}\int(\non):(T_{\ell\hat{n}}u\otimes Z_t + u\otimes T_{\ell\hat{n}}Z_t)dS(\hat{n}),\end{equation}
\begin{equation}\label{45def7}\tilde{P}_1(\ell):= \frac{1}{4\pi}\mathbf{E}\int_{S^2}\int p(\nabla\psi\cdot\hat{n})(T_{\ell\hat{n}}u\cdot\hat{n})dxdS(\hat{n}),\end{equation}
\begin{equation}\label{45def8}\tilde{G}(\ell) := \frac{1}{4\pi}\mathbf{E}\int_{S^2}\int \Delta\psi (u\cdot\hat{n})(T_{\ell\hat{n}}u\cdot\hat{n})dxdS(\hat{n}),\end{equation}
\begin{equation}\label{45def9}\tilde{H}(\ell) := \frac{1}{4\pi} \mathbf{E}\int_{S^2}\int (u\cdot\nabla\psi)(u\cdot\hat{n})(T_{\ell\hat{n}}u\cdot\hat{n})dxdS(\hat{n}).\end{equation}

Notice that in the above we have omitted an expression for the second pressure term on the right hand side of the \hyperref[KHM]{KHM} relation. The reasons for this are the following. To begin with, a term involving $T_h(\nabla p)$ cannot be estimated on the basis of \hyperref[a2]{Assumption 2}. In addition, we cannot merely integrate by parts and use the divergence free condition on $u$ in order to obtain an expression only involving $\nabla\psi$ as we did in the proof of the local $4/3$ law, since any such manipulation produces a term involving $\nabla u:(\hoh),$ which in turn cannot be estimated on the basis of \hyperref[a1]{Assumption 1}. Instead, we will recast this pressure term in a different form involving more convenient expressions. To this end, we recall the following elementary identity:
\begin{equation}\hat{h}_i\hat{h}_j = \delta^{ij}-|h|\partial_{h_i}\hat{h}_j.\end{equation}
We now use this to rewrite:
\begin{align}\mathbf{E}\int_{\mathbb{R}^3}\int \psi\varphi(|h|)(\hoh):(u\otimes\nabla(T_h p))dx dh 
=&\mathbf{E}\int_{\mathbb{R}^3}\int \psi(x)\varphi(|h|)\hat{h}_i\hat{h}_ju^i\partial_jT_hp dxdh \notag\\=& \mathbf{E}\int_{\mathbb{R}^3}\int \psi(x)\varphi(|h|)(\delta^{ij}-|h|\partial_{h_i}\hat{h}_j)u^i\partial_jT_hp dxdh \notag\\=&- \mathbf{E}\int_{\mathbb{R}^3}\int\psi(x)\varphi(|h|)|h|\partial_{h_i}\hat{h}_ju^i\partial_jT_hpdxdh \label{82}
\\  &+4\pi\int_{\ell=0}^\infty\ell^2\varphi(\ell)\bar{P}_2(\ell)d\ell.\notag\end{align}
For the term in \eqref{82}, we integrate by parts in $h$ to dispose of the pressure gradient:
\begin{align}
- \mathbf{E}\int_{\mathbb{R}^3}\int\psi(x)\varphi(|h|)|h|\partial_{h_i}\hat{h}_ju^i\partial_{h_j}T_hpdxdh &= \mathbf{E}\int_{\mathbb{R}^3}\int\partial_{h_j}(\varphi(|h|)|h|)\partial_{h_i}\hat{h}_j\psi u^iT_hpdx dh \\&+\mathbf{E}\int_{\mathbb{R}^3}\int\varphi(|h|)|h|\partial_{h_j}\partial_{h_i}\hat{h}_j\psi u^iT_h pdxdh.\label{85}
\end{align}
Notice that for the function $g(\ell):= \varphi(\ell)\ell,$ we have $\partial_{h_j}\left(g(|h|)\right)\partial_{h_i}\hat{h}_j = g'(|h|)\hat{h}_j\partial_{h_i}\hat{h}_j,$ which vanishes because $\hat{h}_j\partial_{h_i}\hat{h}_j = \frac{1}{2}\partial_{h_i}(\hat{h}_j\hat{h}_j) = \frac{1}{2}\partial_{h_i}1 = 0.$ On the other hand, for the term in \eqref{85}, notice that since $\partial_{h_i}\hat{h}_j = \partial_{h_j}\hat{h}_i,$ we get:
\begin{align}\mathbf{E}\int_{\mathbb{R}^3}\int\varphi(|h|)|h|\partial_{h_j}\partial_{h_i}\hat{h}_j\psi u^iT_h pdxdh &= \mathbf{E}\int_{\mathbb{R}^3}\int\varphi(|h|)|h|\partial_{h_j}\partial_{h_j}\hat{h}_i\psi u^iT_h p dxdh\notag\\
&=-2\mathbf{E}\int_{\mathbb{R}^3}\int\frac{\varphi(|h|)}{|h|}\hat{h}_iT_{-h}(\psi u^i)p dxdh\notag\\
&=-2\int_{\ell=0}^\infty\frac{\varphi(\ell)}{\ell}\mathbf{E}\int\int_{|h|=\ell} T_{-h}(\psi u)\cdot\hat{h}p(x)dS(|h|)dxd\ell\notag\\
&=2\int_{\ell=0}^\infty\frac{\varphi(\ell)}{\ell}\mathbf{E}\int\int_{|y|\leq \ell}T_{-y}(u\cdot\nabla\psi) p(x)dy dx d\ell\notag\\
&=8\pi\int_{\ell=0}^\infty\frac{\varphi(\ell)}{\ell}\tilde{P}_2(\ell)d\ell,
\end{align}
where we denote: 
\begin{equation}\label{45P2}\tilde{P}_2(\ell):=\frac{1}{4\pi}\mathbf{E}\int\int_{|y|\leq \ell}(u\cdot\nabla\psi)T_{y}pdydx.\end{equation}

As in the proof of the local $4/3$ law, the expressions \eqref{45def1} through \eqref{45def9} and \eqref{45P2} will now be used to rewrite the integrals of the KHM relation in spherical coordinates. As an example, we only show how $\tilde{\Gamma}$ appears, the other changes of variables being simpler:
\begin{align}\int_{\mathbb{R}^3}\partial_{h_k}(\phi(|h|)\hoh):2\nu\partial_{h_k}\Gamma(h)dh &= 2\nu \mathbf{E}\int_{\mathbb{R}^3}\int\psi(x)((\phi'(|h|)-2|h|^{-1}\phi(|h|))(u^i\hat{h}_i)(\partial_{h_k}T_hu^j\hat{h}_j\hat{h}_k)dx dh\notag
\\&+2\nu\mathbf{E}\int_{\mathbb{R}^3}|h|^{-1}\phi(|h|)(\delta^{i,k}\hat{h}_j+\hat{h}_i\delta^{j,k})\int\psi(x)u^i\partial_kT_hu^jdxdh\label{visc}\\
&=8\pi\nu\int_{\ell=0}^\infty\ell^2(\phi'(\ell)-2\ell^{-1}\phi(\ell))\tilde{\Gamma}(\ell)d\ell\label{422},
\end{align}
the expression in line \eqref{visc} being equal to $0$ because:
\begin{align}\mathbf{E}\int_{\mathbb{R}^3}|h|^{-1}\phi(|h|)(\delta^{i,k}\hat{h}_j+\hat{h}_i\delta^{j,k})\int\psi(x)u^i\partial_kT_hu^jdxdh =\notag\\ \mathbf{E}\int_{\ell=0}^\infty\ell\phi(\ell)\int \int_{S^2}\psi(x)(u^i\partial_iT_{\ell\hat{n}}u^j\hat{n}_j + \hat{n}_i\partial_jT_{\ell\hat{n}}u^j)dS(\hat{n})dxd\ell=\notag\\
\mathbf{E}\int_{\ell=0}^\infty \ell\phi(\ell)\int\int_{|y|\leq 1} \psi(x)u^i\partial_{y_j}T_{\ell y}u^jdydxd\ell = 0,\notag
\end{align}
where we have used the divergence theorem and the incompressibility of $u$.\\ Using that $\ell^2\phi'(\ell) -2\ell\phi(\ell) = \ell^4(\ell^{-2}\phi(\ell))',$ we rewrite our KHM relation as an equality of integrals in $\ell$:

\begin{align}&-\frac{1}{2}\int_{\ell=0}^\infty\ell^{-2}\phi(\ell)(\ell^4S_{||}(\ell))'d\ell + \int_{\ell=0}^\infty\ell S_0(\ell)d\ell -2\nu\int_{\ell=0}^\infty\ell^{-2}\phi(\ell)(\ell^4\tilde{\Gamma}(\ell))'d\ell \\
+ 2\nu&\int_{\ell=0}^\infty\left(-\ell^{-2}\phi(\ell)(\ell^4\tilde{Q}_1(\ell))' +\ell\phi(\ell)(\tilde{Q}_2(\ell)+\tilde{Q}_3(\ell))\right)d\ell= \\
&\int_{\ell=0}^\infty\ell^2\phi(\ell)\left(\tilde{Z}(\ell)+\tilde{P}_1(\ell)+\bar{P}_2(\ell) + \nu\tilde{G}(\ell) + \tilde{H}(\ell)-\frac{1}{2}\tilde{F}(\ell)\right)d\ell+2\int_{\ell=0}^\infty
\frac{\phi(\ell)}{\ell}\tilde{P}_2(\ell)d\ell.
\end{align}
In the sense of distributions on $(0,\infty)$, this translates to the following ODE:
\begin{align}-\frac{1}{2}(\ell^4S_{||}(\ell))' + \ell^3 S_0(\ell) -&2\nu(\ell^4\tilde{\Gamma}(\ell))'
+ 2\nu\left(-(\ell^4\tilde{Q}_1(\ell))' +\ell^3(\tilde{Q}_2(\ell)+\tilde{Q}_3(\ell))\right)= \\
&\ell^4\left(\tilde{Z}(\ell)+\tilde{P}_1(\ell)+\bar{P}_2(\ell) + \nu\tilde{G}(\ell) + \tilde{H}(\ell)-\frac{1}{2}\tilde{F}(\ell)\right)+ 2\ell\tilde{P}_2(\ell).\end{align}
As in the proof of the $4/3$ law, we now integrate from $0$ to $\ell$ and multiply by $2\ell^{-5}$ in order to obtain an expression for the quantity of interest $\ell^{-1}S_{||}(\ell):$
\begin{align}&-\frac{S_{||}(\ell)}{\ell} + \frac{2}{\ell^5}\int_0^\ell\tau^3 S_0(\tau)d\tau -\frac{4\nu\tilde{\Gamma}(\ell)}{\ell} -\frac{4\nu\tilde{Q}_1(\ell)}{\ell} + \frac{4\nu}{\ell^5}\int_0^\ell\tau^3(\tilde{Q}_2(\tau)+\tilde{Q}_3(\tau))d\tau = \\
&\frac{2}{\ell^5}\int_0^\ell\tau^4\left(\tilde{Z}(\tau)+\tilde{P}_1(\tau)+\bar{P}_2(\tau) + \nu\tilde{G}(\tau) + \tilde{H}(\tau)\right)d\tau+\frac{1}{\ell^5}\int_0^\ell\tau^4\tilde{F}(\tau)d\tau+\frac{4}{\ell^5}\int_0^\ell\tau\tilde{P}_2(\tau)d\tau.
\end{align}
Again, the $\ell\rightarrow 0$ boundary contributions are not present because $$\lim_{\ell\rightarrow0}\ell^4S_{||}(\ell) = \lim_{\ell\rightarrow0}\ell^4\tilde{\Gamma}(\ell) = \lim_{\ell\rightarrow0}\ell^4\tilde{Q}_1(\ell) =0,$$
by \hyperref[L2.1]{Lemma 2.1}, \hyperref[L2.2]{Lemma 2.2} and
the bound $\mathbf{E}\|u\|_{L_x^2}^2 <\infty$ used for $\tilde{Q}_1.$
We now start using our assumptions. Once again, in what follows the little-$o$ notation denotes \textit{uniform in} $\nu$ vanishing at the limit $\ell\rightarrow0.$
For the $\tilde{Z}$ term, it follows by the estimates for the Ornstein-Uhlenbeck SPDE that:
\begin{equation}
\frac{1}{\ell^5}\int_0^\ell\tau^4\tilde{Z}(\tau)d\tau = \frac{1}{5}\tilde{Z}(0) + o(1). \label{L45first}
\end{equation}
For the $\tilde{\Gamma}$ term, we have:
\begin{equation}
    \left|\frac{\nu\tilde{\Gamma}(\ell)}{\ell}\right| \leq C\frac{\nu}{\ell}\mathbf{E}\int_{S^2}\int |u| |\Th \nabla u|dx dS(\hat{n}) \leq C\frac{\nu}{\ell}(\mathbf{E}\|u\|_{L_x^2}^2)^{1/2}(\mathbf{E}\|\nabla u\|_{L_x^2}^2)^{1/2},\notag
\end{equation}
so we pick our scales $\ell_\nu$ such that $(\nu\mathbf{E}\|u\|_{L_x^2}^2)^{1/2} = o(\ell_\nu),$ which is possible according to \eqref{WAD}, and we use \eqref{stb12} to obtain:\noeqref{145,245,345,445,545,645,745}
\begin{equation}
\limsup_{\nu \rightarrow 0} \sup_{\ell\in(\ell_\nu,\ell_I)}\left|\frac{\nu}{\ell}\tilde{\Gamma}(\ell)\right| \leq C \limsup_{\nu\rightarrow 0} \frac{(\nu\mathbf{E}\|u\|_{L_x^2}^2)^{1/2}}{\ell_\nu} = 0.
\label{145}\end{equation}
Similarly, observe that:
\begin{equation}\left|\frac{\nu}{\ell}\tilde{Q}_1(\ell)\right| \leq C\frac{\nu}{\ell}\mathbf{E}\int_{S^2}\int|u||\Th u|dxdS(\hat{n}) \leq \frac{\nu \mathbf{E}\|u\|_{L_x^2}^2}{\ell},\end{equation}
so by the same procedure as above, we obtain:
\begin{equation}\limsup_{\nu \rightarrow 0} \sup_{\ell\in(\ell_\nu,\ell_I)}\left|\frac{\nu}{\ell}\tilde{Q}_1(\ell)\right|\leq C \limsup_{\nu\rightarrow 0} \frac{\nu}{\ell_\nu}\mathbf{E}\|u\|_{L_x^2}^2 \leq C\limsup_{\nu\rightarrow0}\frac{(\nu\mathbf{E}\|u\|_{L_x^2}^2)^{1/2}}{\ell_\nu} = 0.\label{245}\end{equation}
Next, for $\mu=2,3,$ we have:
\begin{equation}
    \left|\frac{\nu}{\ell^5}\int_0^\ell\tau^3\tilde{Q}_\mu(\tau)d\tau\right|\leq C\frac{\nu}{\ell} \mathbf{E}\|u\|_{L_x^2}^2,\notag
\end{equation}
so once again in the same way as before \eqref{WAD} gives:
\begin{equation}\limsup_{\nu\rightarrow 0}\sup_{\ell\in(\ell_\nu,\ell_I)}\left|\frac{\nu}{\ell^5}\int_0^\ell\tau^3\tilde{Q}_\mu(\tau)d\tau\right| \leq C\limsup_{\nu\rightarrow 0} \frac{\nu\mathbf{E}\|u\|_{L_x^2}^2}{\ell_\nu} = 0.\label{345}
\end{equation}
For the term involving $\tilde{G},$ we have:
\begin{equation}
\left|\frac{\nu}{\ell^5}\int_0^\ell\tau^4(\tilde{G}(\tau)-\tilde{G}(0))d\tau\right| \leq C\nu \mathbf{E}\|u\|_{L_x^2}^2, \notag
\end{equation}
so that by \eqref{WAD}:
\begin{align}\limsup_{\nu\rightarrow 0}\sup_{\ell\in(\ell_\nu,\ell_I)}\left|\frac{\nu}{\ell^5}\int_0^\ell\tau^4(\tilde{G}(\tau)-\tilde{G}(0))d\tau\right| = 0\label{445}.\end{align} In other words, we have $\nu\ell^{-5}\int_0^\ell\tau^4\tilde{G}(\tau)d\tau = \frac{\nu}{5}\tilde{G}(0) + o(1).$
At this point our treatment of the viscous terms is complete. We turn to the nonlinear terms.

For the $\tilde{F}$ term, we have:
\begin{equation}|\tilde{F}(\tau)| \leq C (\mathbf{E}\|u\|_{L_x^3}^3)^{2/3}(\mathbf{E}\|\delta_{\tau\hat{n}}\|_{L_x^3}^3)^{1/3},\notag \end{equation}
which vanishes uniformly in $\nu$ when $\tau \rightarrow 0$ by \hyperref[a1]{Assumption 1}, implying that:
\begin{align}\limsup_{\ell_I\rightarrow 0}\limsup_{\nu\rightarrow 0}\sup_{\ell\in(\ell_\nu,\ell_I)}\left|\ell^{-5}\int_0^\ell\tau^4\tilde{F}(\tau)d\tau\right| &\leq C(\mathbf{E}\|u\|_{L_x^3}^3)^{2/3} \limsup_{\ell_I\rightarrow 0}\limsup_{\nu\rightarrow 0}\sup_{|h|\leq \ell_I} (\mathbf{E}\|\delta_{h}\|_{L_x^3}^3)^{1/3}\notag \\ 
&= 0.\label{545}
\end{align}
In an entirely analogous manner, i.e. using exactly the same bounds as for $\tilde{F},$ one obtains:
\begin{equation}\limsup_{\ell_I\rightarrow 0}\limsup_{\nu\rightarrow0}\sup_{\ell\in(\ell_\nu,\ell_I)}\left|\frac{1}{\ell^5}\int_0^\ell\tau^4(\tilde{H}(\tau)-\tilde{H}(0))d\tau \right| = 0\label{645},
\end{equation}
or in other words $\ell^{-5}\int_0^\ell\tau^4\tilde{G}(\tau) = \frac{1}{5}\tilde{G}(0) + o(1).$
For the pressure term involving $\tilde{P}_1,$ we have:
\begin{equation}|\tilde{P}_1(\tau)-\tilde{P}_1(0)| \leq C (\mathbf{E}\|p\|_{L_x^{3/2}}^{3/2})^{2/3}\sup_{\hat{n}\in S^2} (\mathbf{E}\|\delta_{\tau\hat{n}}u\|_{L_x^3}^3)^{1/3} \leq C\sup_{\hat{n}\in S^2} (\mathbf{E}\|\delta_{\tau\hat{n}}u\|_{L_x^3}^3)^{1/3},\notag
\end{equation}
where we have used \hyperref[a2]{Assumption 2} on the boundedness of the pressure. The above implies:
\begin{align}&\limsup_{\ell_I\rightarrow 0}\limsup_{\nu\rightarrow 0} \sup_{\ell\in(\ell_\nu,\ell_I)} \left|\ell^{-5}\int_0^\ell\tau^4(\tilde{P}_1(\tau)-\tilde{P}_1(0))d\tau\right| \leq\notag\\ &C\limsup_{\ell_I \rightarrow 0}\limsup_{\nu\rightarrow 0}\sup_{\ell\in(\ell_\nu,\ell_I)}\ell^{-1}\int_0^\ell\sup_{\hat{n}\in S^2}(\mathbf{E}\|\delta_{\tau\hat{n}}u\|_{L_x^3}^3)^{1/3}d\tau \leq\notag\\
&C\limsup_{\ell_I \rightarrow 0}\limsup_{\nu\rightarrow 0}\sup_{|h|\leq \ell_I}(\mathbf{E}\|\delta_{h}u\|_{L_x^3}^3)^{1/3} = 0,\label{745}
\end{align}
again by \hyperref[a1]{Assumption 1}. This can also be restated as $\ell^{-5}\int_0^\ell\tau^4\tilde{P}_1(\tau)d\tau = \frac{1}{5}\tilde{P}_1(0) + o(1).$
Dealing with the $\bar{P}_2$ term follows an absolutely identical argument to the one in the proof of the local $4/3$ law, save for the different powers of $\ell$ and $\tau$ that appear. We simply remark that as for the previous terms, we again have $\ell^{-5}\int_0^\ell\tau^4\bar{P}_2(\tau)d\tau = \frac{1}{5}\bar{P}_2(0) + o(1).$ It remains to deal with the $\tilde{P}_2$ term. We have:
\begin{align}\frac{1}{\ell^5}\int_0^\ell\tau\tilde{P}_2(\tau)d\tau &= \frac{1}{\ell^5}\int_0^\ell\frac{\tau}{4\pi}\mathbf{E}\int\int_{|y|\leq \tau}(u\cdot\nabla\psi)T_{y}pdydxd\tau\notag\\
&= \frac{1}{\ell^5}\int_0^\ell\tau^4\left( \frac{1}{4\pi\tau^3}\mathbf{E}\int\int_{|y|\leq \tau}(u\cdot\nabla\psi)T_y p dydx - \frac{1}{3}\mathbf{E}\int (u\cdot\nabla\psi)pdx\right)d\tau\notag\\
&+\frac{1}{3\ell^5}\int_0^\ell\tau^4\mathbf{E}\int(u\cdot\nabla\psi)pdx d\tau\notag\\
&= \frac{1}{\ell^5}\int_0^\ell\tau^4\left(\frac{1}{4\pi\tau^3}\mathbf{E}\int\int_{|y|\leq\tau}(u\cdot\nabla\psi)\delta_ypdydx\right)d\tau + \frac{1}{15}\bar{P}_1(0),
\end{align}
and also:
\begin{align}\limsup_{\ell_I\rightarrow0}\limsup_{\nu\rightarrow 0}\sup_{\ell\in(\ell_\nu,\ell_I)}&\left|\ell^{-5}\int_0^\ell \tau^4\left(\frac{1}{4\pi\tau^3}\mathbf{E}\int\int_{|y|\leq\tau}(u\cdot\nabla\psi)\delta_y pdydx\right)d\tau\right| \notag\\&\leq C (\mathbf{E}\|u\|_{L_x^3}^3)^{1/3}\limsup_{\ell_I\rightarrow 0}\limsup_{\nu\rightarrow0}\sup_{|y|\leq \ell_I}(\mathbf{E}\|\delta_yp\|_{L_x^{3/2}}^{3/2})^{2/3}\notag\\
&= 0,
\label{L45last}\end{align}
where we have used both \hyperref[a1]{Assumption 1} and \hyperref[a2]{Assumption 2}. In this case similarly to before we have: $\frac{1}{\ell^5}\int_0^\ell\tau\tilde{P}_2(\tau)d\tau = \frac{1}{15}\bar{P}_1(0) + o(1).$
Collecting the information from \eqref{L45first} through \eqref{L45last}, we have:
\begin{align}\label{alm}&-\frac{S_{||}(\ell)}{\ell} + \frac{2}{\ell^5}\int_0^\ell\tau^3 S_0(\tau)d\tau = \frac{2}{5}\left(\tilde{Z}(0)+\tilde{P}_1(0)+\bar{P}_2(0) + \nu\tilde{G}(0) + \tilde{H}(0)\right)+\frac{4}{15}\bar{P}_1(0) + o(1). 
\end{align}
Noticing that $\int_{S^2}\non dS(\hat{n})=\frac{4\pi}{3}I,$ we have:
\begin{equation}\tilde{Z}(0) = \frac{1}{3}\bar{Z}(0), \tilde{P}_1(0) = \frac{1}{3}\bar{P}_1(0), \tilde{G}(0) = \frac{1}{3}\bar{G}(0), \tilde{H}(0)=\frac{1}{3}\bar{H}(0),\notag
\end{equation}
so combining \eqref{alm} with \eqref{L431}, we obtain:
\begin{align}-\frac{S_{||}(\ell)}{\ell} &= \frac{2}{15}\left(\bar{Z}(0)+5\bar{P}_1(0)+3\bar{P}_2(0) + \bar{H}(0) + \nu\bar{G}(0)\right)\notag\\& +\frac{4}{3\ell^5}\int_0^\ell\tau^4\left(\bar{H}(0)+2\bar{P}_1(0) + 2\bar{Z}(0)+ \nu\bar{G}(0)+ o(1)\right)d\tau  + o(1)\notag\\
&=\frac{2}{5}\left(\bar{Z}(0)+2\bar{P}_1(0)+\bar{H}(0)+\nu\bar{G}(0)\right)+o(1),\label{e451}
\end{align}
and since \eqref{WAD} implies that $\nu\bar{G}(0)$ vanishes as $\nu\rightarrow0$,
we see that \eqref{e451} furnishes \eqref{L451}.
\end{proof}
\begin{proof}[Proof of \eqref{L452}] The proof is the same as the proof of \eqref{L432}, namely, by noticing that the terms in the parentheses in the right hand side of \eqref{e451} are exactly the right hand side of \eqref{LEE}.
\end{proof}

\appendix
\section{Existence of stationary martingale solutions for OU driven Navier-Stokes}\label{Appendix}
We sketch the construction of stationary martingale solutions to \eqref{eqn:NSE}, which closely follows the approach in \cite{FG} for the case of white-in-time forcing. 

Given $k\in \mathbb{N},$ we write Galerkin approximations to \eqref{eqn:NSE}:
\begin{equation}\tag{GNSE}\label{GNSE}
    \begin{cases}\partial_t u^k + P^kB(u^k) + \nu Au^k = Z_t^k\\
    dZ_t^k = -LZ_t^k + P^kdW_t\\
    \end{cases}
\end{equation}
where $P^k$ is the projection onto the subspace of $H$ generated by $(e_j)_{j=1}^k,$ $B(u) = P(\Div(u\otimes u)),$ $Au = -P\Delta u$ and $P$ is the Leray projection operator.

This system is a coupling of an Ornstein-Uhlenbeck SDE with a finite dimensional (random) differential equation with Lipschitz coefficients. 

We can completely adapt the derivation of the estimates found for the process $u_t^k$ in \cite{FG} to the joint process $(u_t^k,Z_t^k)$ in our setting. To begin with, notice that by Ito's formula it follows that for $p\geq 2$:
\begin{align} d\|Z_t^k\|_H^p \leq p \|Z_t^k\|_H^{p-2}\left<Z_t^k,dZ_t^k\right>+ \frac{p(p-1)}{2}\|Z_t^k\|_H^{p-2}\ep dt\implies\\
d\|Z_t^k\|_H^p + p \|Z_t^k\|_H^{p-2}\left<Z_t^k,LZ_t^k\right>\leq p\|Z_t^k\|_H^{p-2}\left<Z_t^k,QdW_t\right> + \frac{p(p-1)}{2}\|Z_t^k\|_H^{p-2}\ep dt\implies\\
d\|Z_t^k\|_H^p + pc_1\|Z_t^k\|_H^{p-2}\|Z_t^k\|_V^2\leq p\|Z_t^k\|_H^{p-2}\left<Z_t^k,QdW_t\right> + C\left(\|Z_t^k\|_H^pdt + \ep^{p/2}\right) dt. \label{PP}
\end{align}
where $\left<\cdot,\cdot\right>$ is the inner product of $H$ and the constant $C$ depends only on $p.$
Hence, by taking expectations we obtain:
\begin{equation}
\mathbf{E}\|Z_t^k\|_H^p + pc_1\mathbf{E}\int_0^t\|Z_s^k\|_H^{p-2}\|Z_s^k\|_V^2 ds \leq \mathbf{E}\|Z_0^k\|_H^p +  C\int_0^t\mathbf{E}\|Z_s^k\|_H^pds + C\ep^{p/2} t
\end{equation}
which implies by Gronwall's inequality that for every time interval $[0,T]$ there exists a finite positive constant which we again denote by $C,$ depending only on $p$ and $T,$ such that
\begin{equation}
\mathbf{E}\|Z_t^k\|_H^p \leq C
\end{equation}
for all $t$ in $[0,T].$

This in turn implies the existence of another positive constant (again denoted by $C$) depending on $p,T$ and $c_1,$ such that: \begin{equation}\mathbf{E}\int_0^T\|Z_t^k\|_H^{p-2}\|Z_t^k\|_V^2 \leq C < \infty.
\end{equation}
Plugging in $p=2,$ we obtain the fundamental estimate:
\begin{equation}\mathbf{E}\int_0^T\|Z_t^k\|_V^2 \leq C < \infty.
\end{equation}
Note that none of the bounds above depend on $k,$ because we have assumed that $Z_0^k = 0$ for all $k\geq 1.$ The only difference between the above and the estimates carried out in \cite{FG} is that we need to assume coercivity of the bilinear form $\left<Lu,v\right>$ in the space $V$ (ensuring which is the role of the constant $c_1$) whereas for $A$ this is immediate since one just takes $\left<Av,v\right> = \|v\|_{V}^2$ by definition. 

Next, we apply the Burkholder-Davis-Gundy inequality to get that for an absolute constant $\tilde{C}$:
\begin{align}\mathbf{E}\sup_{0\leq t \leq T}\left|\int_0^t \|Z_s^k\|_{H}^{p-2}\left<Z_s^k,QdW_s\right>\right| &\leq \tilde{C} \mathbf{E}\left(\int_0^T \|Z_s^k\|_H^{2p-2}\ep ds \right)^{\frac{1}{2}}\\
&\leq \tilde{C}\mathbf{E}\left[\sup_{0\leq t\leq T}\|Z_s^k\|_{H}^{\frac{p}{2}}\left(\int_0^T\|Z_s^k\|_H^{p-2}\ep ds\right)^{\frac{1}{2}}\right]\\
&\leq \frac{1}{2p}\mathbf{E}\sup_{0\leq t\leq T}\|Z_s^k\|_H^p + \frac{p}{2}\tilde{C}^2\mathbf{E}\int_0^T\|Z_s^k\|_H^{p-2}\ep ds\\
&\leq \frac{1}{2p}\mathbf{E}\sup_{0\leq t\leq T}\|Z_s^k\|_H^p + \frac{pC\tilde{C}^2}{2}\mathbf{E}\int_0^T\|Z_s^k\|_H^p ds + \frac{pC\tilde{C}^2}{2}\ep^{p/2}t,
\end{align}
where the constant $C$ is the one that appears in line \eqref{PP}.

From \eqref{PP} and the inequality just derived, it follows that for a new constant $C$:
\begin{align}
\mathbf{E}\sup_{0\leq t\leq T}\|Z_t^k\|_H^p \leq \mathbf{E}\|Z_0^k\|_H^p+ \frac{1}{2}\mathbf{E}\sup_{0\leq t\leq T}\|Z_t^k\|_H^p + C\mathbf{E}\int_0^T\|Z_s^k\|_H^p ds + C\ep^{p/2}t,
\end{align}
hence by Gronwall's we obtain that there exists a  positive constant $C$ depending on $p$ and $T$ such that:
\begin{equation}\mathbf{E}\sup_{0\leq t\leq T}\|Z_t^k\|_H^p \leq C<\infty.\end{equation}
The same analysis can be carried out for the Galerkin equation governing $u_t^k$ using classical rather than stochastic calculus, i.e. this case is more elementary than the one treated in the Appendix in \cite{FG}.

We therefore arrive at estimates of the form:

\begin{align}
    \mathbf{E}\int_0^T\|(u_t^k,Z_t^k)\|_{V\times V}^2dt \leq C < \infty,\label{gstb1}\\
    \mathbf{E}\sup_{0\leq t\leq T}\|(u_t^k,Z_t^k)\|_{H\times H}^p \leq C < \infty.\label{gstb2}
\end{align}
The rest of the work done in \cite{FG} can be repeated in exactly the same way. One uses the Skorokhod embedding theorem and ends up with a stochastic basis $(\tilde{\Omega},\tilde{F},\{\tilde{F}_t\}_{t\geq 0}, \tilde{P},\tilde{W}_t),$ $\tilde{W}_t$ being a $Q$-Wiener process on which we will also have a process $(\tilde{u}_t,\tilde{Z}_t)$ satisfying \eqref{eqn:NSE} in the sense of \hyperref[D1]{Definition 1.2}. 

Note that a priori it is only shown that $\tilde{Z}_t$ satisfies the equation 
$$d\tilde{Z}_t = -L\tilde{Z}_t dt + d\tilde{W}_t$$
in the weak sense. However, it is easy to see that under conditions specified in \cite[Chapter~5]{DPZ} this weak solution is in fact strong and in the stationary case corresponds to an invariant measure of the Ornstein-Uhlenbeck SPDE above.
Note that the estimates \eqref{gstb1} and \eqref{gstb2} are used to derive stationary martingale solutions satisfying \eqref{stb1} and \eqref{stb2}.\\

\subsection*{Acknowledgements}
The author would like to thank his advisor, Jacob Bedrossian, for suggesting the problem addressed in this paper and providing guidance. The author would also like to thank the anonymous referees for their helpful insights, which significantly improved this work. This work was partially supported by Jacob Bedrossian's NSF CAREER grant DMS-1552826.

\footnotesize{\bibliography{bibliographyfinal}}

@article{Aetal,
  title={Analogy between predictions of {K}olmogorov and {Y}aglom},
  author={Antonia, RA and Ould-Rouis, M and Anselmet, F and Zhu, Y},
  journal={Journal of Fluid Mechanics},
  volume={332},
  pages={395--409},
  year={1997},
  publisher={Cambridge University Press}
}

@article{BTW,
  title={Onsager's conjecture with physical boundaries and an application to the vanishing viscosity limit},
  author={Bardos, Claude and Titi, Edriss S and Wiedemann, Emil},
  journal={Communications in Mathematical Physics},
  volume={370},
  number={1},
  pages={291--310},
  year={2019},
  publisher={Springer}
}

@article{45,
  title={A Sufficient Condition for the {K}olmogorov 4/5 Law for Stationary Martingale Solutions to the 3{D} {N}avier--{S}tokes Equations},
  author={Bedrossian, Jacob and Coti Zelati, Michele and Punshon-Smith, Samuel and Weber, Franziska},
  journal={Communications in Mathematical Physics},
  volume={367},
  number={3},
  pages={1045--1075},
  year={2019},
  publisher={Springer}
}

@article{2D,
  title={Sufficient Conditions for Dual Cascade Flux Laws in the Stochastic 2d {N}avier--{S}tokes Equations},
  author={Bedrossian, Jacob and Coti Zelati, Michele and Punshon-Smith, Sam and Weber, Franziska},
  journal={Archive for Rational Mechanics and Analysis},
  pages={1--43},
  year={2020},
  publisher={Springer}
}

@article{LC,
  title={Lagrangian chaos and scalar advection in stochastic fluid mechanics},
  author={Bedrossian, Jacob and Blumenthal, Alex and Punshon-Smith, Samuel},
  journal={arXiv preprint arXiv:1809.06484},
  year={2018}
}

@article{ASEM,
  title={Almost-sure exponential mixing of passive scalars by the stochastic {N}avier-{S}tokes equations},
  author={Bedrossian, Jacob and Blumenthal, Alex and Punshon-Smith, Samuel},
  journal={arXiv preprint arXiv:1905.03869},
  year={2019}
}

@article{Benzietal,
  title={On the scaling of three-dimensional homogeneous and isotropic turbulence},
  author={Benzi, Roberto and Ciliberto, Sergio and Baudet, Cristophe and Chavarria, Gerardo Ruiz},
  journal={Physica D: Nonlinear Phenomena},
  volume={80},
  number={4},
  pages={385--398},
  year={1995},
  publisher={Elsevier}
}

@article{BDLSV,
author = {Buckmaster, Tristan and De Lellis, Camillo and Székelyhidi Jr., László and Vicol, Vlad},
title = {Onsager's Conjecture for Admissible Weak Solutions},
journal = {Communications on Pure and Applied Mathematics},

volume = {72},
number = {2},
pages = {229-274},
doi = {10.1002/cpa.21781},
year = {2019}
}

@article{BV,
  title={Convex integration and phenomenologies in turbulence},
  author={Buckmaster, Tristan and Vicol, Vlad},
  journal={arXiv preprint arXiv:1901.09023},
  year={2019}
}

@article{CET,
  title={Onsager's conjecture on the energy conservation for solutions of {E}uler's equation},
  author={Constantin, Peter and Titi, Edriss S and E, Weinan},
  journal={Communications in Mathematical Physics},
  volume={165},
  number={1},
  pages={207},
  year={1994},
  publisher={Berlin: Springer-Verlag Heidelberg, 1965-}
}

@article{CKN,
  title={Partial regularity of suitable weak solutions of the {N}avier-{S}tokes equations},
  author={Caffarelli, Luis and Kohn, Robert and Nirenberg, Louis},
  journal={Communications on pure and applied mathematics},
  volume={35},
  number={6},
  pages={771--831},
  year={1982},
  publisher={Wiley Online Library}
}

@book{DPZ,
  title={Stochastic equations in infinite dimensions},
  author={Da Prato, Giuseppe and Zabczyk, Jerzy},
  year={2014},
  publisher={(Cambridge: Cambridge University Press)}
}

@article{DF,
  title={Energy dissipation in body-forced turbulence},
  author={Doering, Charles R and Foias, Ciprian},
  journal={Journal of Fluid Mechanics},
  volume={467},
  pages={289},
  year={2002},
  publisher={Cambridge University Press}
}

@article{DrEy,
	doi = {10.1088/1361-6544/ab2f42},
	year = 2019,
	month = {oct},
	publisher = {{IOP} Publishing},
	volume = {32},
	number = {11},
	pages = {4465--4482},
	author = {Drivas, Theodore D and Eyink, Gregory L},
	title = {An {O}nsager singularity theorem for {L}eray solutions of incompressible {N}avier{\textendash}{S}tokes},
	journal = {Nonlinearity}
}

@article{DN,
  title={Onsager's conjecture and anomalous dissipation on domains with boundary},
  author={Drivas, Theodore D and Nguyen, Huy Q},
  journal={SIAM Journal on Mathematical Analysis},
  volume={50},
  number={5},
  pages={4785--4811},
  year={2018},
  publisher={SIAM}
}

@article{DN2,
  title={Remarks on the emergence of weak Euler solutions in the vanishing viscosity limit},
  author={Drivas, Theodore D and Nguyen, Huy Q},
  journal={Journal of Nonlinear Science},
  volume={29},
  number={2},
  pages={709--721},
  year={2019},
  publisher={Springer}
}

@article{DR,
  title={Inertial energy dissipation for weak solutions of incompressible {E}uler and {N}avier-{S}tokes equations},
  author={Duchon, Jean and Robert, Raoul},
  journal={Nonlinearity},
  volume={13},
  number={1},
  pages={249},
  year={2000},
  publisher={IOP Publishing}
}

@article{Eetal,
  title={Invariant measures for {B}urgers equation with stochastic forcing},
  author={E, Weinan and Khanin, Konstantin and Mazel, Alexander and Sinai, Ya},
  journal={Annals of Mathematics},
  pages={877--960},
  year={2000},
  publisher={JSTOR}
}

@article{ES,
  title={Recent results on mathematical and statistical hydrodynamics},
  author={E, Weinan and Sinai, Yakov G},
  journal={Russian Mathematical Surveys},
  volume={55},
  number={4},
  pages={635},
  year={2000},
  publisher={IOP Publishing}
}

@article{EG,
  title={Mollification in strongly {L}ipschitz domains with application to continuous and discrete de {R}ham complexes},
  author={Ern, Alexandre and Guermond, Jean-Luc},
  journal={Computational Methods in Applied Mathematics},
  volume={16},
  number={1},
  pages={51--75},
  year={2016},
  publisher={De Gruyter}
}

@article{Ey2,
  title={Local 4/5-law and energy dissipation anomaly in turbulence},
  author={Eyink, Gregory L},
  journal={Nonlinearity},
  volume={16},
  number={1},
  pages={137},
  year={2002},
  publisher={IOP Publishing}
}

@article{Ey1,
  title={Energy dissipation without viscosity in ideal hydrodynamics {I}. {F}ourier analysis and local energy transfer},
  author={Eyink, Gregory L},
  journal={Physica D: Nonlinear Phenomena},
  volume={78},
  number={3-4},
  pages={222--240},
  year={1994},
  publisher={Elsevier}
}

@article{FG,
  title={Martingale and stationary solutions for stochastic {N}avier-{S}tokes equations},
  author={Flandoli, Franco and Gatarek, Dariusz},
  journal={Probability Theory and Related Fields},
  volume={102},
  number={3},
  pages={367--391},
  year={1995},
  publisher={Springer}
}

@article{FRMarkov,
  title={Markov selections for the 3D stochastic {N}avier--{S}tokes equations},
  author={Flandoli, Franco and Romito, Marco},
  journal={Probability Theory and Related Fields},
  volume={140},
  number={3-4},
  pages={407--458},
  year={2008},
  publisher={Springer}
}

@book{Frisch,
  title={Turbulence: the legacy of {AN} {K}olmogorov},
  author={Frisch, Uriel},
  year={1995},
  publisher={(Cambridge: Cambridge University Press)}
}

@article{Graueretal,
  title={Longitudinal and transverse structure functions in high-{R}eynolds-number turbulence},
  author={Grauer, Rainer and Homann, Holger and Pinton, Jean-Fran{\c{c}}ois},
  journal={New Journal of Physics},
  volume={14},
  number={6},
  pages={063016},
  year={2012},
  publisher={IOP Publishing}
}

@article{HH,
  title={The {K}olmogorov--{R}iesz compactness theorem},
  author={Hanche-Olsen, Harald and Holden, Helge},
  journal={Expositiones Mathematicae},
  volume={28},
  number={4},
  pages={385--394},
  year={2010},
  publisher={Elsevier}
}

@article{Is,
  title={A proof of {O}nsager's conjecture},
  author={Isett, Philip},
  journal={Annals of Mathematics},
  volume={188},
  number={3},
  pages={871--963},
  year={2018},
  publisher={JSTOR}
}

@article{ISY,
  title={Refined similarity hypothesis using three-dimensional local averages},
  author={Iyer, Kartik P and Sreenivasan, Katepalli R and Yeung, PK},
  journal={Physical Review E},
  volume={92},
  number={6},
  pages={063024},
  year={2015},
  publisher={APS}
}

@article{K41a,
  title={The local structure of turbulence in incompressible viscous fluid for very large {R}eynolds numbers},
  author={Kolmogorov, Andrey Nikolaevich},
  journal={Cr Acad. Sci. URSS},
  volume={30},
  pages={301--305},
  year={1941}
}

@inproceedings{K41b,
  title={Dissipation of energy in locally isotropic turbulence},
  author={Kolmogorov, Andrey Nikolaevich},
  booktitle={Akademiia Nauk SSSR Doklady},
  volume={32},
  pages={16},
  year={1941}
}

@article{KH,
  title={On the statistical theory of isotropic turbulence},
  author={De Karman, Theodore and Howarth, Leslie},
  journal={Proceedings of the Royal Society of London. Series A-Mathematical and Physical Sciences},
  volume={164},
  number={917},
  pages={192--215},
  year={1938},
  publisher={The Royal Society}
}

@article{LS,
  title={The Energy Measure for the {E}uler and {N}avier--{S}tokes Equations},
  author={Leslie, Trevor M and Shvydkoy, Roman},
  journal={Archive for Rational Mechanics and Analysis},
  volume={230},
  number={2},
  pages={459--492},
  year={2018},
  publisher={Springer}
}

@book{MY,
  title={Statistical fluid mechanics, volume II: mechanics of turbulence},
  author={Monin, Andreij Sergeevich and Yaglom, Akiva M},
  year={1975},
  publisher={(Cambridge, MA: MIT Press)}
}

@article{Ob,
  title={Some specific features of atmospheric turbulence},
  author={Obukhov, AM},
  journal={Journal of Geophysical Research},
  volume={67},
  number={8},
  pages={3011--3014},
  year={1962},
  publisher={Wiley Online Library}
}

@article{Ons,
  title={Statistical hydrodynamics},
  author={Onsager, Lars},
  journal={Il Nuovo Cimento (1943-1954)},
  volume={6},
  pages={279--287},
  year={1949},
  publisher={Springer}
}

@book{RRS,
  title={The three-dimensional Navier--Stokes equations: Classical theory},
  author={Robinson, James C and Rodrigo, Jos{\'e} L and Sadowski, Witold},
  volume={157},
  year={2016},
  publisher={(Cambridge: Cambridge University Press)}
}

@article{Shv,
  title={On the energy of inviscid singular flows},
  author={Shvydkoy, Roman},
  journal={Journal of Mathematical Analysis and Applications},
  volume={2},
  number={349},
  pages={583--595},
  year={2009}
}

@book{Sohr,
  title={The {N}avier-{S}tokes equations: An elementary functional analytic approach},
  author={Sohr, Hermann},
  year={2012},
  publisher={Springer Science \& Business Media}
}

@article{SK,
  title={An update on the intermittency exponent in turbulence},
  author={Sreenivasan, KR and Kailasnath, P},
  journal={Physics of Fluids A: Fluid Dynamics},
  volume={5},
  number={2},
  pages={512--514},
  year={1993},
  publisher={American Institute of Physics}
}

@article{Tay,
author = {Taylor, G. I. },
title = {Statistical Theory of Turbulence},
journal = {Proceedings of the Royal Society of London. Series A - Mathematical and Physical Sciences},
volume = {151},
number = {873},
pages = {421-444},
year = {1935},
doi = {10.1098/rspa.1935.0158},

}

@article{TB,
  title={An oscillating {L}angevin antenna for driving plasma turbulence simulations},
  author={TenBarge, JM and Howes, Gregory G and Dorland, William and Hammett, Gregory W},
  journal={Computer Physics Communications},
  volume={185},
  number={2},
  pages={578--589},
  year={2014},
  publisher={Elsevier}
}

@article{vdW,
  title={High-order structure functions of turbulence},
  author={van de Water, Willem and Herweijer, Janine A},
  journal={Journal of Fluid Mechanics},
  volume={387},
  pages={3--37},
  year={1999},
  publisher={Cambridge University Press}
}

\end{document}